\newtheorem{Theorem}{Theorem}[section]
\newtheorem*{Theorem*}{Theorem}
\newtheorem{Corollary}[Theorem]{Corollary}
\newtheorem{Lemma}[Theorem]{Lemma}
\newtheorem{Proposition}[Theorem]{Proposition}
{ \theoremstyle{definition}
\newtheorem{Definition}[Theorem]{Definition}

\newtheorem{Construction}[Theorem]{Construction}
\newtheorem{Example}[Theorem]{Example}
\newtheorem{Remark}[Theorem]{Remark} }
\numberwithin{equation}{section}
\begin{document}

\allowdisplaybreaks

\newcommand{\arXivNumber}{2403.03211}

\renewcommand{\PaperNumber}{092}

\FirstPageHeading

\ShortArticleName{Extension Theory and Fermionic Strongly Fusion 2-Categories}

\ArticleName{Extension Theory and Fermionic Strongly\\ Fusion 2-Categories
 (with an Appendix by\\ Thibault Didier D\'ecoppet and Theo Johnson-Freyd)}

\Author{Thibault Didier D\'ECOPPET}

\AuthorNameForHeading{T.D.~D\'ecoppet}

\Address{Mathematics Department, Harvard University, Cambridge, Massachusetts, USA}
\Email{\href{mailto:decoppet@math.harvard.edu}{decoppet@math.harvard.edu}}
\URLaddress{\url{https://www.thibaultdecoppet.com}}

\ArticleDates{Received March 11, 2024, in final form October 09, 2024; Published online October 17, 2024}

\Abstract{We study group graded extensions of fusion 2-categories. As an application, we~obtain a homotopy theoretic classification of fermionic strongly fusion 2-categories. We~examine various examples in detail.}

\Keywords{extension theory; fusion 2-category; supercohomology}

\Classification{18M20; 18N25}

\section{Introduction}

Let us fix an algebraically closed field $\mathds{k}$ of characteristic zero. In the theory of fusion 1-categories over $\mathds{k}$, one of the main tools for constructing new fusion 1-categories out of the ones that are already known is the concept of a group graded extension as introduced in \cite{ENO2}. Fusion 2-ca\-tegories over $\mathds{k}$ were introduced in \cite{DR}, and categorify the notion of a fusion 1-category. It is then only natural to investigate the notion of group graded extension for fusion 2-categories. More specifically, setting up group graded extension theory for fusion 2-categories is motivated by the problem of classifying fermionic strongly fusion 2-categories. In more detail, recall that a~bosonic (resp.\ fermionic) strongly fusion 2-category is a fusion 2-category whose braided fusion 1-category of endomorphisms of the monoidal unit is $\mathbf{Vect}$ (resp.\ $\mathbf{SVect}$). It was proven in~\cite{D9} that every fusion 2-category over $\mathds{k}$ is Morita equivalent to the 2-Deligne tensor product of a~strongly fusion 2-category and an invertible fusion 2-category, the latter corresponding to the data of a non-degenerate braided fusion 1-category. This motivates the problem of classifying strongly fusion 2-categories. Now, it was shown in \cite{JFY} that every simple object of such a fusion 2-category is invertible. In particular, strongly fusion 2-categories are group graded extensions of either $\mathbf{2Vect}$ or $\mathbf{2SVect}$. On the one hand, the classification of bosonic strongly fusion 2-categories is then routine: It consists of a finite group together with a 4-cocycle \cite[Section~2.1]{DR}. On the other hand, the classification of fermionic strongly fusion 2-categories is more subtle, and does require the full strength of extension theory for fusion 2-categories. We ought to mention that, at a Physical level of rigor and in the context of the classification of topological orders in~$(3+1)$ dimensions, related results were already discussed in \cite{LW} and \cite{JF}.

\subsection{Extension theory for fusion 1-categories}

Let us fix a fusion 1-category $\mathcal{C}$. We can then consider the associated space of invertible (finite semisimple) $\mathcal{C}$-$\mathcal{C}$-bimodule 1-categories, invertible bimodule functors, and bimodule natural isomorphisms. We denote this space by $\mathcal{B}{\rm r}\mathcal{P}{\rm ic}(\mathcal{C})$, and refer to it as the Brauer--Picard space of $\mathcal{C}$. This space admits a canonical product structure given by the relative Deligne tensor product over $\mathcal{C}$. In fact, extension theory was the original motivation for the introduction of the relative Deligne tensor product of finite semisimple module 1-categories over a fusion 1-category~\cite{ENO2}. The relative Deligne tensor product endows the Brauer--Picard space with the structure of a group-like topological monoid, so that we may consider its delooping $\mathrm{B}\mathcal{B}{\rm r}\mathcal{P}{\rm ic}(\mathcal{C})$. The main result of \cite{ENO2} is that, given any finite group $G$, faithfully $G$-graded extensions of $\mathcal{C}$ are parameterised by homotopy classes of maps from $\mathrm{B}G$ to $\mathrm{B}\mathcal{B}{\rm r}\mathcal{P}{\rm ic}(\mathcal{C})$.

\subsection{Results}

Over an algebraically closed field $\mathds{k}$ of characteristic zero, the existence of the relative 2-Deligne tensor product for finite semisimple module 2-categories over a fusion 2-category was established in \cite{D10}. In particular, for any fusion 2-category $\mathfrak{C}$, one can consider the associated space $\mathscr{B}{\rm r}\mathscr{P}{\rm ic}(\mathfrak{C})$ of invertible $\mathfrak{C}$-$\mathfrak{C}$-bimodule 2-categories. We refer to this space as the Brauer--Picard space of~$\mathfrak{C}$, and note that, by construction, it can be delooped. We obtain a categorification of the main theorem of \cite{ENO2} to fusion 2-categories, noting that a more general result was announced in \cite{JFR3}.

\begin{Theorem*}[Theorem~\ref{thm:extensiontheory}]
For any finite group $G$, faithfully $G$-graded extensions of the fusion~$2$-category $\mathfrak{C}$ are classified by homotopy classes of maps from $\mathrm{B}G$ to $\mathrm{B}\mathscr{B}{\rm r}\mathscr{P}{\rm ic}(\mathfrak{C})$, the delooping of the Brauer--Picard space of $\mathfrak{C}$.
\end{Theorem*}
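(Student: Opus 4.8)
The plan is to categorify the strategy of \cite{ENO2} by exhibiting an equivalence between the space of faithfully $G$-graded extensions of $\mathfrak{C}$ and the based mapping space $\mathrm{Map}_*\bigl(\mathrm{B}G,\mathrm{B}\mathscr{B}{\rm r}\mathscr{P}{\rm ic}(\mathfrak{C})\bigr)$, and then passing to connected components. The central structural input is that the relative 2-Deligne tensor product $\boxtimes_{\mathfrak{C}}$, whose existence is guaranteed by \cite{D10}, endows $\mathscr{B}{\rm r}\mathscr{P}{\rm ic}(\mathfrak{C})$ with the structure of a group-like monoidal higher groupoid: the monoidal unit is $\mathfrak{C}$ regarded as the regular bimodule, and every invertible bimodule 2-category has a two-sided $\boxtimes_{\mathfrak{C}}$-inverse, namely its opposite bimodule. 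Group-likeness is precisely the condition permitting a single delooping, so that $\mathrm{B}\mathscr{B}{\rm r}\mathscr{P}{\rm ic}(\mathfrak{C})$ is connected and $\mathscr{B}{\rm r}\mathscr{P}{\rm ic}(\mathfrak{C})\simeq\Omega\,\mathrm{B}\mathscr{B}{\rm r}\mathscr{P}{\rm ic}(\mathfrak{C})$.

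The first main step translates the algebraic datum of an extension into grading data. Given a faithfully $G$-graded fusion 2-category $\mathfrak{D}=\bigoplus_{g\in G}\mathfrak{D}_g$ with $\mathfrak{D}_e\simeq\mathfrak{C}$, each homogeneous component $\mathfrak{D}_g$ is a $\mathfrak{C}$-$\mathfrak{C}$-bimodule 2-category, and the monoidal product restricts to bimodule functors $\mathfrak{D}_g\boxtimes_{\mathfrak{C}}\mathfrak{D}_h\to\mathfrak{D}_{gh}$. I would first prove that each $\mathfrak{D}_g$ is \emph{invertible}: faithfulness of the grading forces these fusion functors to be equivalences, and rigidity of $\mathfrak{D}$ exhibits $\mathfrak{D}_{g^{-1}}$ as a $\boxtimes_{\mathfrak{C}}$-inverse. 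The associator and higher coherence data of $\mathfrak{D}$ then assemble into a homotopy-coherent monoidal functor from the discrete group $G$ into $\mathscr{B}{\rm r}\mathscr{P}{\rm ic}(\mathfrak{C})$. Conversely, from such a monoidal functor one reconstructs an extension by forming the direct sum $\bigoplus_{g}\mathfrak{C}_g$ and transporting the coherence data to a monoidal structure, and one must check that the result is again a fusion 2-category with the prescribed trivial component and a faithful grading.

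The second main step is the homotopy-theoretic identification. Since $G$ is discrete and $\mathscr{B}{\rm r}\mathscr{P}{\rm ic}(\mathfrak{C})$ is group-like, the space of homotopy-coherent monoidal functors $G\to\mathscr{B}{\rm r}\mathscr{P}{\rm ic}(\mathfrak{C})$ is computed, via the bar/delooping adjunction together with $\mathscr{B}{\rm r}\mathscr{P}{\rm ic}(\mathfrak{C})\simeq\Omega\,\mathrm{B}\mathscr{B}{\rm r}\mathscr{P}{\rm ic}(\mathfrak{C})$, by the based mapping space $\mathrm{Map}_*\bigl(\mathrm{B}G,\mathrm{B}\mathscr{B}{\rm r}\mathscr{P}{\rm ic}(\mathfrak{C})\bigr)$. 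Combining the two steps gives a bijection between equivalence classes of faithfully $G$-graded extensions and $\pi_0$ of this mapping space, that is, homotopy classes of maps $\mathrm{B}G\to\mathrm{B}\mathscr{B}{\rm r}\mathscr{P}{\rm ic}(\mathfrak{C})$, which is the assertion.

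I expect the main obstacle to be the reconstruction direction and the attendant coherence bookkeeping. The Brauer--Picard space is a homotopy $3$-type, with homotopy concentrated in equivalence classes of invertible bimodules, bimodule autoequivalences, bimodule natural isomorphisms, and invertible modifications (ultimately $\mathds{k}^\times$), so its delooping is a $4$-type and the data of a monoidal functor out of $G$ involves coherence constraints up through the pentagonator level. Verifying that the transported structure on $\bigoplus_g\mathfrak{C}_g$ satisfies all of these, and in particular that invertibility of each graded piece is exactly what is needed to produce a rigid monoidal $2$-category rather than merely a graded one, is where the genuine work lies; the homotopy-theoretic step, by contrast, is formal once the group-like monoidal structure on $\mathscr{B}{\rm r}\mathscr{P}{\rm ic}(\mathfrak{C})$ is established.
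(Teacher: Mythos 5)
Your outline follows the same overall route as the paper: identify a faithfully $G$-graded extension with a strongly monoidal functor from $G$ into the Brauer--Picard space, observe that the passage to homotopy classes of maps $\mathrm{B}G\rightarrow\mathrm{B}\mathscr{B}{\rm r}\mathscr{P}{\rm ic}(\mathfrak{C})$ is formal, and ground the direction from extensions to functors in the invertibility of the homogeneous components and the equivalences $\mathfrak{C}_g\boxtimes_{\mathfrak{C}_e}\mathfrak{C}_h\simeq\mathfrak{C}_{gh}$ (this is Proposition~\ref{prop:gradedMoritaequivalence}). One difference of emphasis: the coherence bookkeeping you flag as the main obstacle is exactly what the paper's formalism eliminates. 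Rather than transporting associators and pentagonators onto $\boxplus_g\mathfrak{C}_g$ by hand, the proof works in the $(\infty,1)$-category $\mathscr{C}$ obtained from $\mathbf{\operatorname{End}}_{\mathbf{F2C}}(\mathfrak{C})$ by discarding non-invertible $n$-morphisms for $n\geq 2$, and encodes a $G$-graded algebra as a factorization of a lax monoidal functor $*\rightarrow\mathscr{C}$ through the coproduct completion $G^{\sqcup}$ via the canonical algebra $A[G]=\coprod_{g}g$. Evaluating a strongly monoidal $F\colon G\rightarrow\mathscr{C}$ on $A[G]$ then produces the monoidal 2-category $\mathfrak{D}=\boxplus_gF(g)$ with all coherences already in place.

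The genuine gap is that the one step you explicitly defer---showing the reconstructed $\mathfrak{D}$ is rigid, hence actually a fusion 2-category rather than merely a graded monoidal one---is the substantive content of the paper's proof, and your proposal gives no indication of how to carry it out; ``invertibility of each graded piece is exactly what is needed'' is the claim to be proved, not an argument. The paper's argument runs as follows: by Theorem~\ref{thm:Moritadual}, $\mathbf{\operatorname{End}}_{\mathfrak{C}}(\mathfrak{D})$ is a (multi)fusion 2-category, so the left $\mathfrak{C}$-module endofunctor $R_X$ given by $D\mapsto D\Box X$ for a simple $X$ in $\mathfrak{D}_g$ admits a right adjoint $R_X^*$; since $F$ is strongly monoidal it preserves duals, so $\mathfrak{D}_{g^{-1}}\rightarrow\mathbf{Fun}_{\mathfrak{D}_e}(\mathfrak{D}_g,\mathfrak{D}_e)$ is an equivalence of bimodule 2-categories (compare Corollary~\ref{cor:invertibledual}), whence $R_X^*\simeq Y\Box(-)$ for some $Y$ in $\mathfrak{D}_{g^{-1}}$, and this $Y$ is a left dual of $X$ (right duals are analogous). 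Until you supply this step, or an equivalent one, the reconstruction direction of your proposal is incomplete at precisely the point where the theorem's content lies.
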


We wish to remark that Tambara--Yamagami fusion 2-categories, which are $\mathbb{Z}/2$-graded fusion 2-categories whose non-trivially graded piece is $\mathbf{2Vect}$, were studied in \cite{DY23} via different methods. In fact, unlike in the decategorified setting, it seems difficult to use extension theory to classify Tambara--Yamagami fusion 2-categories, as the group $H^5(\mathrm{B}\mathbb{Z}/2;\mathds{k}^{\times})$, in which a certain obstruction lies, is non-zero.

Using results from \cite{GJF0} and \cite{JFR}, we perform a careful analysis of the structure of the Brauer--Picard space associated to the fusion 2-category $\mathbf{2SVect}$. When combined with the above theorem, it yields the following result.

\begin{Theorem*}[Theorem~\ref{thm:classificationFSF2Cs}]
Fermionic strongly fusion $2$-categories equipped with a faithful grading are classified by a finite group $G$ together with a class $\varpi$ in $H^2(\mathrm{B}G;\mathbb{Z}/2)$ and a class $\pi$ in~$SH^{4+\varpi}(\mathrm{B}G)$.
\end{Theorem*}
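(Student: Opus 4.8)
The plan is to specialise the extension theorem (Theorem~\ref{thm:extensiontheory}) to the fusion 2-category $\mathbf{2SVect}$ and to extract the stated invariants from the homotopy type of its Brauer--Picard space. As recalled in the introduction, \cite{JFY} guarantees that every simple object of a fermionic strongly fusion 2-category is invertible; these invertible objects form a finite group, and quotienting by the fermion (the nontrivial invertible object of the unit component) yields a finite group $G$ for which the category is a faithfully $G$-graded extension of its unit component $\mathbf{2SVect}$. Theorem~\ref{thm:extensiontheory} then identifies such categories with the set of homotopy classes $[\mathrm{B}G,\mathrm{B}\mathscr{B}{\rm r}\mathscr{P}{\rm ic}(\mathbf{2SVect})]$, so that everything rests on understanding this mapping space.

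The technical heart is the homotopy type of $\mathscr{B}{\rm r}\mathscr{P}{\rm ic}(\mathbf{2SVect})$. I would compute the Drinfeld center $\mathscr{Z}(\mathbf{2SVect})$ and its invertible objects with the condensation-theoretic tools of \cite{GJF0} and the structural input of \cite{JFR}, reading off the homotopy groups from the invertible objects and braided autoequivalences of the center. I expect to find that every invertible bimodule is trivial, $\pi_0(\mathscr{B}{\rm r}\mathscr{P}{\rm ic}(\mathbf{2SVect}))=0$, with $\pi_1\cong\mathbb{Z}/2$, $\pi_2\cong\mathbb{Z}/2$ and $\pi_3\cong\mathds{k}^\times$; both copies of $\mathbb{Z}/2$ are incarnations of the fermion, which survives into the center as an invertible object and as an invertible object of its looping, while the $\mathds{k}^\times$ records endomorphisms of the identity. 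Passing to the delooping, $\mathrm{B}\mathscr{B}{\rm r}\mathscr{P}{\rm ic}(\mathbf{2SVect})$ is simply connected with $\pi_2\cong\mathbb{Z}/2$, $\pi_3\cong\mathbb{Z}/2$ and $\pi_4\cong\mathds{k}^\times$. Its first $k$-invariant is forced to lie in $H^4(K(\mathbb{Z}/2,2);\mathbb{Z}/2)\cong\mathbb{Z}/2$, and I expect it to be the nonzero operation $\mathrm{Sq}^2$, encoding the sign in the symmetric braiding of $\mathbf{SVect}$; it is exactly this Steenrod square that forces supercohomology to appear.

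Given this Postnikov data, I would evaluate maps out of $\mathrm{B}G$ by climbing the tower. As the target is simply connected, the grading group contributes only through the source, and the stage $K(\mathbb{Z}/2,2)$ records a single class $\varpi\in H^2(\mathrm{B}G;\mathbb{Z}/2)$; concretely $\varpi$ is the extension class of the group of invertible objects of the extension, a central $\mathbb{Z}/2$-extension of $G$ by the fermion. The obstruction to lifting $\varpi$ past the next stage is $\mathrm{Sq}^2\varpi$, and the subsequent choices, gluing the $\pi_3$-layer in $H^3(\mathrm{B}G;\mathbb{Z}/2)$ to the $\pi_4$-layer in $H^4(\mathrm{B}G;\mathds{k}^\times)$ through the residual $k$-invariant, are by construction classified by the $\varpi$-twisted supercohomology group $SH^{4+\varpi}(\mathrm{B}G)$, producing the class $\pi$. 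Recording the two stages as $(\varpi,\pi)$ gives the asserted bijection.

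The principal obstacle is the computation of $\mathscr{B}{\rm r}\mathscr{P}{\rm ic}(\mathbf{2SVect})$, and within it the identification of the first $k$-invariant as $\mathrm{Sq}^2$: establishing the vanishing of $\pi_0$ and the precise higher homotopy groups genuinely depends on the analysis of $\mathscr{Z}(\mathbf{2SVect})$ provided by \cite{GJF0} and \cite{JFR}. The remaining delicate point is to check that the $\varpi$-twisted Postnikov lifting problem reproduces the very definition of $SH^{4+\varpi}$ rather than an untwisted product of ordinary cohomology groups; this twisting, enforced by the Steenrod square, is what makes supercohomology the correct invariant and is the crux of the argument.
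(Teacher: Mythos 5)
Your overall strategy (specialize Theorem~\ref{thm:extensiontheory} to $\mathbf{2SVect}$ and analyze the mapping space into the delooped Brauer--Picard space) is the paper's strategy, but the computation on which everything rests is wrong, and the error propagates into a misidentification of the invariants. By Example~\ref{ex:BrauerPicardSymmetric} with $\mathcal{E}=\mathbf{SVect}$, one has $\pi_1\big(\mathscr{B}{\rm r}\mathscr{P}{\rm ic}(\mathbf{2SVect})\big)=\operatorname{Inv}(\mathscr{Z}(\mathbf{2SVect}))\cong Z(\mathbb{Z}/2)\oplus\operatorname{Pic}(\mathbf{SVect})\cong\mathbb{Z}/2\oplus\mathbb{Z}/2$, not $\mathbb{Z}/2$: besides the fermion $C$ there is an invertible object $M$ of the Drinfeld center lying \emph{outside} the identity component (the Clifford-algebra module 1-category generating $\operatorname{Pic}(\mathbf{SVect})$). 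With your groups the target would be exactly $\mathrm{B}^2\mathbf{2SVect}^{\times}$, and maps out of $\mathrm{B}G$ would be classified by a \emph{single} class in untwisted $SH^4(\mathrm{B}G)$; there is no room in that Postnikov tower for an independent degree-2 invariant $\varpi$ \emph{and} a full group $SH^{4+\varpi}(\mathrm{B}G)$ of further choices. Your account extracts $\varpi$ from the $\pi_2$-stage and then claims the remaining stages give $SH^{4+\varpi}$, which double-counts the bottom layer.

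Relatedly, you identify $\varpi$ with the extension class of $\operatorname{Inv}(\mathfrak{C})$ as a central $\mathbb{Z}/2$-extension of $G$. The paper explicitly warns (Remark~\ref{rem:varpialphaunfolding}) that this is false: the extension class is the bottom layer $\alpha$ of $\pi=(\alpha,\beta,\gamma)\in SH^{4+\varpi}(\mathrm{B}G)$, whereas $\varpi$ records the action of $G$ on $\mathbf{2SVect}$ by conjugation (equivalently the interchanger data); Example~\ref{ex:actionnoextension} gives categories with nontrivial $\varpi$ and split $\operatorname{Inv}(\mathfrak{C})$, and Appendix~\ref{sec:computation} gives the opposite. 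The missing ingredients are: (i) the correct $k$-invariants of $\mathrm{B}^2\mathscr{Z}(\mathbf{2SVect})^{\times}$, whose first one is $c_2^2+c_2m_2$ on $(\mathbb{Z}/2\oplus\mathbb{Z}/2)[2]$ rather than $\mathrm{Sq}^2$ on a single $\mathbb{Z}/2[2]$ (Lemma~\ref{lem:kinvariantsZ2SVect}); and (ii) the identification of $\mathrm{B}^2\mathscr{Z}(\mathbf{2SVect})^{\times}\to\mathbb{Z}/2[2]$ (collapsing the identity component) with the Borel construction $\mathrm{B}^2\mathbf{2SVect}^{\times}//(\mathbb{Z}/2[1])\to\mathbb{Z}/2[2]$, which requires checking that $M$ acts on $\mathbf{2SVect}$ by the nontrivial braided autoequivalence via the double braiding $b_{M,e}\cdot b_{e,M}=-\operatorname{Id}$ (Lemma~\ref{lem:supercohomolohyfibration}). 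It is this fibration, not the $\mathrm{Sq}^2$ $k$-invariant of $\mathbf{2SVect}^{\times}$ alone, that makes a map $\mathrm{B}G\to\mathrm{B}\mathscr{B}{\rm r}\mathscr{P}{\rm ic}(\mathbf{2SVect})$ decompose as a twist $\varpi\colon\mathrm{B}G\to\mathbb{Z}/2[2]$ together with a section class in $SH^{4+\varpi}(\mathrm{B}G)$.
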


In the physics literature, versions of this classification have already appeared in \cite{LW}, and \cite[Section~V.D]{JF}. In particular, the notation $SH^{4+\varpi}(\mathrm{B}G)$ refers to $\varpi$-twisted supercohomology, the twisted generalized cohomology theory associated to the space $\mathbf{2SVect}^{\times}$ of invertible objects and morphisms in $\mathbf{2SVect}$. This generalized cohomology theory is well known in the literature on symmetry protected topological phases \cite{GJF0,KT,WG}, where it is referred to as extended supercohomology.

It is instructive to briefly discuss how the above data can be recovered from a given fermionic strongly fusion 2-category $\mathfrak{C}$. The corresponding group $G$ is the group of connected components of $\mathfrak{C}$. Further, the group of invertible objects of $\mathfrak{C}$ is a central extension of $G$ by $\mathbb{Z}/2$. However, it is not necessarily the extension corresponding to $\varpi$, rather it depends on the class $\pi$. We end by examining many examples of the above classification. In particular, we find in Example~\ref{ex:FSF2CZ/2} that there are exactly three fermionic strongly fusion 2-categories whose group of connected components is $\mathbb{Z}/2^n$ for any positive integer $n$. We also study the fermionic strongly fusion 2-categories whose group of connected components is $\mathbb{Z}/2\oplus\mathbb{Z}/2$ in Example~\ref{ex:FSF2CZ2Z2}. Finally, it was asked in \cite{JFY} whether there exists a finite group $G$ and a faithfully $G$-graded fermionic strongly fusion 2-category with trivial twist $\varpi$ whose group of invertible objects is a non-trivial central extension of $G$. In Appendix~\ref{sec:computation}, joint with Theo Johnson-Freyd, we perform a computation in supercohomology in order to show that this behaviour does occur when $G=\mathbb{Z}/4\oplus \mathbb{Z}/4$.

\section{Preliminaries}

\subsection{Fusion 2-categories}

Over an algebraically closed field $\mathds{k}$ of characteristic zero, the notions of finite semisimple 2-category and fusion 2-category were introduced in \cite{DR}. We succinctly review these definitions here, and refer the reader to the aforementioned reference and also \cite{D2} for details.

A ($\mathds{k}$-linear) 2-category $\mathfrak{C}$ is locally finite semisimple if its $\operatorname{Hom}$-1-categories are finite semisimple. An object $C$ of $\mathfrak{C}$ is called simple if $\operatorname{Id}_C$ is a simple object of the finite semisimple 1-category~$\operatorname{End}_{\mathfrak{C}}(C)$. Then, a \textit{finite semisimple} 2-category is a locally finite semisimple 2-category that is Cauchy complete in the sense of \cite{GJF} (see also \cite{D1}), has right and left adjoints for 1-morphisms, and has finitely many equivalence classes of simple objects. In a finite semisimple 2-category $\mathfrak{C}$, any two simple objects are called connected if there exists a non-zero 1-morphism between them. This defines an equivalence relation, and we write $\pi_0(\mathfrak{C})$ for the corresponding set of equivalences classes; This is the set of \textit{connected components} of $\mathfrak{C}$.

Now, the definition of a monoidal 2-category is well known. In particular, using the notations of \cite{SP}, a monoidal structure on the 2-category $\mathfrak{C}$ involves a 2-functor $\Box\colon \mathfrak{C}\times \mathfrak{C}\rightarrow\mathfrak{C}$, the monoidal product, and a distinguished object $I$, the monoidal unit, together with various other coherence data. Given any object $C$ of $\mathfrak{C}$, we can ask for the existence of its left dual $^{\sharp}C$ and its right dual~$C^{\sharp}$. These two notions were studied in \cite{Pstr}. Categorifying the concept of a fusion 1-category is therefore straightforward, and we obtain the following definition.

\begin{Definition}
A multifusion 2-category is a finite semisimple monoidal 2-category $\mathfrak{C}$ such that every object $C$ admits a right dual $C^{\sharp}$ and a left dual $^{\sharp}C$. A fusion 2-category is a multifusion 2-category whose monoidal unit $I$ is a simple object.
\end{Definition}

To every fusion 2-category $\mathfrak{C}$, there is an associated braided fusion 1-category $\Omega\mathfrak{C}:=\operatorname{End}_{\mathfrak{C}}(I)$, the 1-category of endomorphisms of the monoidal unit $I$ of $\mathfrak{C}$. Conversely, given any braided fusion 1-category $\mathcal{B}$, we can consider the fusion 2-category $\mathbf{Mod}(\mathcal{B})$ of finite semisimple left $\mathcal{B}$-module 1-categories with monoidal structure given by $\boxtimes_{\mathcal{B}}$, the relative Deligne tensor product. Below, we give another class of examples that will be relevant for our purposes. Many more may be found in \cite{DR} and \cite{DY23}, and others will be discussed subsequently.

\begin{Example}
Given $G$ a finite group, and $\pi$ a 4-cocycle for $G$ with coefficients in $\mathds{k}^{\times}$, we can then consider the fusion 2-category $\mathbf{2Vect}^{\pi}(G)$ of $G$-graded finite semisimple 1-categories, also known as 2-vector spaces, with pentagonator twisted by $\pi$. Such fusion 2-category were completely characterized in \cite{JFY} as those fusion 2-categories $\mathfrak{C}$ for which $\Omega\mathfrak{C}\simeq \mathbf{Vect}$. We call these bosonic strongly fusion 2-categories.
\end{Example}

\subsection{Module 2-categories and the relative 2-Deligne tensor product}

Let $\mathfrak{C}$ be a monoidal 2-category with monoidal product $\Box$, and monoidal unit $I$. A right $\mathfrak{C}$-module 2-category is a 2-category $\mathfrak{M}$ equipped with an action 2-functor $\Box\colon\mathfrak{M}\times\mathfrak{C}\rightarrow \mathfrak{M}$, and coherence data satisfying various axioms. Likewise, there is a notion of left $\mathfrak{C}$-module 2-category. Additionally, there are notions of $\mathfrak{C}$-module 2-functors, $\mathfrak{C}$-module 2-natural transformations, and~$\mathfrak{C}$-module modifications. There are also notions of bimodule 2-categories, and maps between them. We refer the reader to \cite[Section~2]{D4} for the precise definitions.

Let us now assume that $\mathfrak{C}$ is a fusion 2-category. If $\mathfrak{M}$ is a left $\mathfrak{C}$-module 2-category, then we can consider $\mathbf{\operatorname{End}}_{\mathfrak{C}}(\mathfrak{M})$, the monoidal 2-category of left $\mathfrak{C}$-module 2-endofunctors on $\mathfrak{M}$. The next result combines \cite[Theorem~5.3.2]{D8} with \cite[Corollary~5.1.3]{D9}.

\begin{Theorem}\label{thm:Moritadual}
The monoidal $2$-category $\mathbf{\operatorname{End}}_{\mathfrak{C}}(\mathfrak{M})$ is a multifusion $2$-category.
\end{Theorem}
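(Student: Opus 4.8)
The plan is to verify directly the two defining properties of a multifusion 2-category for $\mathbf{\operatorname{End}}_{\mathfrak{C}}(\mathfrak{M})$: that it is a finite semisimple monoidal 2-category, and that every object admits both a left and a right dual. The monoidal structure is already in hand, being given by composition of module 2-endofunctors with monoidal unit the identity functor $\operatorname{Id}_{\mathfrak{M}}$, so no additional coherence data needs to be produced; only the finiteness, semisimplicity, and rigidity conditions remain to be checked. Note that the unit $\operatorname{Id}_{\mathfrak{M}}$ need not be simple, which is exactly why one obtains a multifusion, rather than a fusion, 2-category.

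For finite semisimplicity, I would invoke \cite[Theorem~5.3.2]{D8}. At the level of a sketch, this requires checking the four conditions in the definition of a finite semisimple 2-category. Local finite semisimplicity amounts to showing that, for any two module 2-endofunctors, the 1-category of module 2-natural transformations and module modifications between them is finite semisimple; this is inherited from the finite semisimplicity of $\mathfrak{M}$ once one observes that the module constraints cut out a finite semisimple subcategory of ordinary natural transformations. Cauchy completeness and the existence of adjoints for 1-morphisms are likewise transported from $\mathfrak{M}$. The one genuinely quantitative point, that there are only finitely many equivalence classes of simple objects, is the substance of the cited result and ultimately rests on $\mathfrak{M}$ having finitely many simple objects.

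For rigidity, the guiding idea is that the dual of a module 2-endofunctor $F$ ought to be an adjoint of $F$. Since $\mathfrak{M}$ is finite semisimple, \cite[Corollary~5.1.3]{D9} ensures that every module 2-endofunctor admits both a left adjoint ${}^{\sharp}F$ and a right adjoint $F^{\sharp}$, and---crucially---that these adjoints again carry the structure of $\mathfrak{C}$-module 2-endofunctors. The unit and counit of such an adjunction then serve as the coevaluation and evaluation 1-morphisms in $\mathbf{\operatorname{End}}_{\mathfrak{C}}(\mathfrak{M})$ exhibiting $F^{\sharp}$ as a right dual of $F$; the left dual is obtained symmetrically from ${}^{\sharp}F$.

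The main obstacle I expect lies in this rigidity step, namely in ensuring that the adjoints of module 2-functors genuinely promote to duals in the monoidal 2-category. In the categorified setting the zigzag identities cannot simply be imposed as equalities: the coherence data of a module 2-functor must be transported along the adjunction, and the resulting duality data must then be verified to satisfy the relevant cusp and swallowtail coherence conditions up to the appropriate invertible modifications. Organizing this bookkeeping, rather than overcoming any conceptual difficulty, is where the real effort lies, and it is precisely what the machinery of \cite{D8} and \cite{D9} is built to handle.
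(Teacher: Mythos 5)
The paper gives no independent proof of this statement: it is imported wholesale by combining \cite[Theorem~5.3.2]{D8} with \cite[Corollary~5.1.3]{D9}, which is exactly the route you take, so your proposal matches the paper's approach. One small correction to your division of labour: \cite[Corollary~5.1.3]{D9} is not a statement about adjoints of module 2-functors but a separability statement about fusion 2-categories, while \cite[Theorem~5.3.2]{D8} already establishes the full multifusion conclusion (finite semisimplicity and rigidity, the latter indeed obtained from adjoints of module 2-endofunctors as you describe) under a separability hypothesis that the corollary from \cite{D9} then discharges.
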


Moreover, the multifusion 2-category $\mathbf{\operatorname{End}}_{\mathfrak{C}}(\mathfrak{M})$ is fusion if and only if $\mathfrak{M}$ is indecomposable as a finite semisimple left $\mathfrak{C}$-module 2-category.

For our purposes, we need to consider another operation on finite semisimple module 2-categories. Let $\mathfrak{M}$ be a finite semisimple right $\mathfrak{C}$-module 2-category and $\mathfrak{N}$ be a finite semisimple left $\mathfrak{C}$-module 2-category. There are notions of $\mathfrak{C}$-balanced 2-functors, $\mathfrak{C}$-balanced 2-natural transformations, and $\mathfrak{C}$-balanced modifications out of $\mathfrak{M}\times\mathfrak{N}$. We refer the reader to \cite[Section~2.2]{D10} for the details. The next result is a combination of \cite[Theorem~2.2.4]{D10} with \cite[Corollary~5.1.3]{D9}.

\begin{Theorem}
There is a $3$-universal $\mathfrak{C}$-balanced $2$-functor $\boxtimes_{\mathfrak{C}}\colon \mathfrak{M}\times\mathfrak{N}\rightarrow \mathfrak{M}\boxtimes_{\mathfrak{C}}\mathfrak{N}$ to a finite semisimple $2$-category.
\end{Theorem}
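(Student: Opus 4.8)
The plan is to obtain the statement by pairing a conditional construction of the relative 2-Deligne tensor product with the automatic separability of fusion 2-categories over $\mathds{k}$. Recall that the sought-after object is pinned down by a universal property: a $\mathfrak{C}$-balanced 2-functor $\boxtimes_{\mathfrak{C}}\colon \mathfrak{M}\times\mathfrak{N}\to \mathfrak{M}\boxtimes_{\mathfrak{C}}\mathfrak{N}$ is \emph{3-universal} when restriction along it induces, for every target 2-category, an equivalence between 2-functors out of $\mathfrak{M}\boxtimes_{\mathfrak{C}}\mathfrak{N}$ and $\mathfrak{C}$-balanced 2-functors out of $\mathfrak{M}\times\mathfrak{N}$, coherently on 1-morphisms, 2-natural transformations, and modifications. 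Establishing the theorem thus amounts to producing such a 2-functor with finite semisimple target.

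First I would invoke \cite[Theorem~2.2.4]{D10}, which constructs a 3-universal $\mathfrak{C}$-balanced 2-functor landing in a finite semisimple 2-category, under the hypothesis that the fusion 2-category $\mathfrak{C}$ is separable, so that its finite semisimple module 2-categories are separable as well. I would treat the construction there — whether realized through a 2-category of modules over a balancing algebra or through a bar-type construction — as a black box, together with its verification of the universal property and of finite semisimplicity of the target.

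It then remains only to discharge the separability hypothesis. This is exactly the content of \cite[Corollary~5.1.3]{D9}, which guarantees that every fusion 2-category over an algebraically closed field of characteristic zero is separable; in particular, all finite semisimple module 2-categories over $\mathfrak{C}$ inherit the separability needed to apply \cite[Theorem~2.2.4]{D10}. Combining the two removes the hypothesis and yields the unconditional statement for our $\mathfrak{C}$, $\mathfrak{M}$, and $\mathfrak{N}$.

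The hard part is not computational but lies in recognizing that separability is the single missing input, and that the characteristic-zero hypothesis is precisely what makes it free: once \cite[Corollary~5.1.3]{D9} supplies separability, both the existence of the 3-universal balanced 2-functor and the finite semisimplicity of $\mathfrak{M}\boxtimes_{\mathfrak{C}}\mathfrak{N}$ follow formally from \cite[Theorem~2.2.4]{D10}.
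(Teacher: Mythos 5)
Your proposal matches the paper exactly: the theorem is stated there as a combination of \cite[Theorem~2.2.4]{D10}, which constructs the $3$-universal $\mathfrak{C}$-balanced $2$-functor with finite semisimple target under a separability hypothesis, with \cite[Corollary~5.1.3]{D9}, which supplies separability for all fusion $2$-categories over an algebraically closed field of characteristic zero. You correctly identified that discharging the separability hypothesis is the only remaining input.
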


In fact, the proof of \cite[Theorem~2.2.4]{D10} gives an explicit description of $\mathfrak{M}\boxtimes_{\mathfrak{C}}\mathfrak{N}$. More precisely, if $A$ is a (necessarily separable) algebra in $\mathfrak{C}$ such that $\mathfrak{M}$ is equivalent to $\mathbf{LMod}_{\mathfrak{C}}(A)$, the right~$\mathfrak{C}$-module 2-category of left $A$-modules in $\mathfrak{C}$, and $B$ is a (necessarily separable) algebra in $\mathfrak{C}$ such that $\mathfrak{N}$ is equivalent to $\mathbf{Mod}_{\mathfrak{C}}(B)$, the left $\mathfrak{C}$-module 2-category of right $B$-modules in~$\mathfrak{C}$, then~$\mathfrak{M}\boxtimes_{\mathfrak{C}}\mathfrak{N}\simeq \mathbf{Bimod}_{\mathfrak{C}}(A,B)$, the finite semisimple 2-category of $A$-$B$-bimodules in $\mathfrak{C}$.

Finally, let us recall that, as was explained in \cite[Section~3.2]{D10}, the existence of the relative 2-Deligne tensor product allows us to consider the symmetric monoidal Morita 4-category $\mathbf{F2C}$ of (multi)fusion 2-categories, finite semisimple bimodule 2-categories, and bimodule morphisms.

\section{Extension theory}

For fusion 1-categories over an algebraically closed field of characteristic zero, extension theory was developed in \cite{ENO2}. The key concept is that of an invertible bimodule 1-category, or, more precisely, the space formed by such objects together with their invertible morphisms. Proceeding in a similar fashion, we will discuss extension theory for fusion 2-categories, i.e., we will study group graded fusion 2-categories in the sense of the definition below.

\begin{Definition}
Let $\mathfrak{C}$ be a fusion 2-category and $G$ a finite group. A $G$-grading on $\mathfrak{C}$ is a~decomposition $\boxplus_{g\in G}\mathfrak{C}_g$ of $\mathfrak{C}$ into a direct sum of finite semisimple 2-categories such that for every $C$ in $\mathfrak{C}_g$ and $D$ in $\mathfrak{C}_h$, $C\Box D$ lies $\mathfrak{C}_{gh}$. A $G$-grading on $\mathfrak{C}$ is faithful if $\mathfrak{C}_g$ is non-zero for every $g$ in $G$.
\end{Definition}

\subsection{Invertible bimodule 2-categories}

Let $\mathds{k}$ be an algebraically closed field of characteristic zero. We begin by recalling a definition from \cite{DY23}. Let $\mathfrak{C}$ and $\mathfrak{D}$ be two fusion 2-categories. We write $\mathfrak{D}^{\rm mop}$ for the fusion 2-category obtained by endowing the finite semisimple 2-category $\mathfrak{D}$ with the opposite of the monoidal product of $\mathfrak{D}$.

\begin{Definition}
A finite semisimple $\mathfrak{C}$-$\mathfrak{D}$-bimodule 2-category $\mathfrak{M}$ is invertible if the canonical monoidal 2-functor $\mathfrak{D}^{\rm mop}\rightarrow \mathbf{\operatorname{End}}_{\mathfrak{C}}(\mathfrak{M})$ is an equivalence.
\end{Definition}

At the decategorified level, i.e., for fusion 1-categories, invertibility of a finite semisimple bimodule 1-category admits various equivalent characterizations as is explained in \cite[Proposition~4.2]{ENO2}. A categorified version of this result was given in \cite{D10}, which we partially recall below.

\begin{Proposition}\label{prop:invertibilitycharacterization}
Let $\mathfrak{M}$ be a finite semisimple $\mathfrak{C}$-$\mathfrak{D}$-bimodule $2$-category. The following are equivalent:
\begin{enumerate}\itemsep=0pt
 \item[$(1)$] The $\mathfrak{C}$-$\mathfrak{D}$-bimodule $2$-category $\mathfrak{M}$ is invertible.
 \item[$(2)$] The $\mathfrak{C}$-$\mathfrak{D}$-bimodule $2$-category $\mathfrak{M}$ defines an invertible $1$-morphism from $\mathfrak{C}$ to $\mathfrak{D}$ in $\mathbf{F2C}$.
\end{enumerate}
\end{Proposition}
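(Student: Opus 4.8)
The plan is to realize condition (1) as the statement that one of the structure 2-functors of an adjunction in $\mathbf{F2C}$ is an equivalence, and then to promote this one-sided statement to genuine invertibility. First I would record that, since $\mathfrak{M}$ is a finite semisimple $\mathfrak{C}$-$\mathfrak{D}$-bimodule 2-category, it is a 1-morphism $\mathfrak{C}\to\mathfrak{D}$ in $\mathbf{F2C}$, and that it admits the conjugate bimodule $\overline{\mathfrak{M}}$ (the 2-category $\mathfrak{M}$ with its two actions dualized) as a 1-morphism $\mathfrak{D}\to\mathfrak{C}$. Because fusion 2-categories are separable, every 1-morphism of $\mathbf{F2C}$ admits adjoints and $\overline{\mathfrak{M}}$ realizes them; I would fix the adjunction $\overline{\mathfrak{M}}\dashv\mathfrak{M}$, with unit $u\colon \mathfrak{D}\to\overline{\mathfrak{M}}\boxtimes_{\mathfrak{C}}\mathfrak{M}$ and counit $c\colon \mathfrak{M}\boxtimes_{\mathfrak{D}}\overline{\mathfrak{M}}\to\mathfrak{C}$, both as bimodule 2-functors.

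The key input is the canonical equivalence $\overline{\mathfrak{M}}\boxtimes_{\mathfrak{C}}\mathfrak{M}\simeq\mathbf{\operatorname{End}}_{\mathfrak{C}}(\mathfrak{M})$ of monoidal 2-categories, hence of $\mathfrak{D}$-$\mathfrak{D}$-bimodule 2-categories via the right $\mathfrak{D}$-action. Under this identification, the assertion that the canonical monoidal 2-functor $\mathfrak{D}^{\mathrm{mop}}\to\mathbf{\operatorname{End}}_{\mathfrak{C}}(\mathfrak{M})$ is an equivalence becomes the assertion that the unit $u$ is an equivalence, since both say exactly that $\overline{\mathfrak{M}}\boxtimes_{\mathfrak{C}}\mathfrak{M}$ is the regular $\mathfrak{D}$-$\mathfrak{D}$-bimodule. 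Thus condition (1) is equivalent to ``$u$ is an equivalence'', while condition (2) is equivalent, by uniqueness of inverses and their coincidence with the adjoint $\overline{\mathfrak{M}}$, to ``both $u$ and $c$ are equivalences''. The implication (2)$\Rightarrow$(1) is then immediate, and the whole content is the converse: assuming $u$ is an equivalence, show that $c$ is an equivalence.

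For this crux I would argue by symmetry together with Morita reciprocity. Assuming $u$ is an equivalence gives $\mathbf{\operatorname{End}}_{\mathfrak{C}}(\mathfrak{M})\simeq\mathfrak{D}^{\mathrm{mop}}$, which is fusion; by the remark following Theorem~\ref{thm:Moritadual} this forces $\mathfrak{M}$ to be indecomposable as a left $\mathfrak{C}$-module 2-category. I would then apply the same identification with the two actions exchanged, obtaining $\mathfrak{M}\boxtimes_{\mathfrak{D}}\overline{\mathfrak{M}}\simeq\mathbf{\operatorname{End}}_{\mathfrak{D}^{\mathrm{mop}}}(\mathfrak{M})$ as $\mathfrak{C}$-$\mathfrak{C}$-bimodule 2-categories, under which $c$ corresponds to the canonical 2-functor $\mathfrak{C}\to\mathbf{\operatorname{End}}_{\mathfrak{D}^{\mathrm{mop}}}(\mathfrak{M})$. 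It then suffices to show this last 2-functor is an equivalence, i.e.\ that $\mathfrak{C}$ recovers the double dual of $\mathfrak{M}$; this is precisely the categorified double centralizer (Morita reciprocity) statement for indecomposable finite semisimple module 2-categories.

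The step I expect to be the main obstacle is exactly this double-centralizer input. Formally, $u$ being an equivalence only shows that $\overline{\mathfrak{M}}$ is fully faithful as a 1-morphism, which does not by itself yield essential surjectivity; promoting fully faithful to an equivalence is where the finiteness and rigidity of the situation must be used. Concretely one either invokes Morita reciprocity for module 2-categories as supplied by the structure theory of \cite{D8, D9, D10}, or one runs the decategorified dimension argument of \cite[Proposition~4.2]{ENO2} in categorified form, comparing global dimensions of $\mathfrak{C}$, $\mathfrak{D}$ and $\mathfrak{M}$ to conclude that a fully faithful comparison 2-functor between 2-categories of matching dimension must be an equivalence. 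I would present the reciprocity route, since it avoids the numerical bookkeeping and isolates the single genuinely categorical ingredient.
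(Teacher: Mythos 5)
The paper does not actually prove this proposition: it is imported verbatim from \cite{D10} (``A categorified version of this result was given in \cite{D10}, which we partially recall below''), so there is no in-paper argument to compare against. Judged on its own terms, your outline is the expected categorification of \cite[Proposition~4.2]{ENO2} and is essentially the route taken in the cited source: identify $\overline{\mathfrak{M}}\boxtimes_{\mathfrak{C}}\mathfrak{M}$ with $\mathbf{\operatorname{End}}_{\mathfrak{C}}(\mathfrak{M})$ so that condition $(1)$ becomes ``the unit of the adjunction $\overline{\mathfrak{M}}\dashv\mathfrak{M}$ is an equivalence,'' observe that $(2)$ asks for both unit and counit to be equivalences, and close the gap with the double-centralizer statement $\mathfrak{C}\simeq\mathbf{\operatorname{End}}_{\mathbf{\operatorname{End}}_{\mathfrak{C}}(\mathfrak{M})}(\mathfrak{M})$ for an indecomposable module. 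You correctly isolate that last step as the one carrying all the content, and it is indeed not something you can wave at: it is supplied by the Morita theory of \cite{D8,D9} (the fusion, hence indecomposability, of $\mathbf{\operatorname{End}}_{\mathfrak{C}}(\mathfrak{M})$ granted by $(1)$ is exactly the hypothesis needed there), so citing it is legitimate rather than circular. Two points deserve slightly more care if you were to write this out: the construction of the conjugate bimodule $\overline{\mathfrak{M}}$ and the equivalence $\overline{\mathfrak{M}}\boxtimes_{\mathfrak{C}}\mathfrak{M}\simeq\mathbf{\operatorname{End}}_{\mathfrak{C}}(\mathfrak{M})$ as $\mathfrak{D}$-$\mathfrak{D}$-bimodules are themselves nontrivial inputs from \cite{D8,D10} (one categorical level up, ``opposite module'' involves adjoints of both $1$- and $2$-morphisms), and you should check that the canonical monoidal $2$-functor $\mathfrak{D}^{\rm mop}\to\mathbf{\operatorname{End}}_{\mathfrak{C}}(\mathfrak{M})$ really does correspond to the adjunction unit under that equivalence rather than merely to some map with the same source and target. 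Neither point is a flaw in the strategy, only in how much is being outsourced.
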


The next proposition gives preliminary insight into the relation between group graded fusion 2-categories and invertible bimodule 2-categories. We note that the first part of the result below has already appeared as \cite[Proposition~3.1.7]{DY23}. We also refer the reader to \cite[Theorem~6.1]{ENO2} for the decategorified version of this proposition.

\begin{Proposition}\label{prop:gradedMoritaequivalence}
Let $\mathfrak{C}$ be a faithfully $G$-graded fusion $2$-category $\mathfrak{C}$. For any $g\in G$, the finite semisimple $\mathfrak{C}_e$-$\mathfrak{C}_e$-bimodule $2$-category $\mathfrak{C}_g$ is invertible. Furthermore, for any $g,h\in G$, the $2$-functor $\Box\colon\mathfrak{C}\times\mathfrak{C}\rightarrow \mathfrak{C}$ induces an equivalence \smash{$\mathfrak{C}_g\boxtimes_{\mathfrak{C}_e}\mathfrak{C}_h\xrightarrow{\simeq}\mathfrak{C}_{gh}$} of $\mathfrak{C}_e$-$\mathfrak{C}_e$-bimodule $2$-categories.
\end{Proposition}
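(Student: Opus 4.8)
The plan is to construct the comparison 2-functor from the universal property of the relative 2-Deligne tensor product, show it is essentially surjective using rigidity, and finally upgrade this to an equivalence; the invertibility of $\mathfrak{C}_g$ can then be recovered as a consequence, or quoted from \cite[Proposition~3.1.7]{DY23}.

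First I would record that $\mathfrak{C}_e$ is itself a fusion 2-category (it contains $I$, is closed under $\Box$ since $e\cdot e=e$, and is stable under duals since $e^{-1}=e$), and that each $\mathfrak{C}_g$ inherits the structure of a finite semisimple $\mathfrak{C}_e$-$\mathfrak{C}_e$-bimodule 2-category by restricting $\Box$, the relations $e\cdot g=g=g\cdot e$ guaranteeing that both actions land back in $\mathfrak{C}_g$. The restriction $\Box\colon\mathfrak{C}_g\times\mathfrak{C}_h\to\mathfrak{C}_{gh}$ is canonically $\mathfrak{C}_e$-balanced, the balancing 2-natural equivalence being supplied by the associator of $\mathfrak{C}$. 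By the $3$-universal property of $\boxtimes_{\mathfrak{C}_e}$, this induces a $\mathfrak{C}_e$-$\mathfrak{C}_e$-bimodule 2-functor $M_{g,h}\colon\mathfrak{C}_g\boxtimes_{\mathfrak{C}_e}\mathfrak{C}_h\to\mathfrak{C}_{gh}$, its bimodule structure coming from the interchange data of the monoidal 2-category. Everything then reduces to proving that $M_{g,h}$ is an equivalence.

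Next I would show $M_{g,h}$ is essentially surjective. Fix a nonzero simple object $X$ of $\mathfrak{C}_g$, which exists because the grading is faithful; its right dual $X^{\sharp}$ lies in $\mathfrak{C}_{g^{-1}}$, and rigidity together with the simplicity of $I$ exhibits $I$ as a direct summand of $X\Box X^{\sharp}$. Given any simple object $Z$ of $\mathfrak{C}_{gh}$, set $Y:=X^{\sharp}\Box Z$, which lies in $\mathfrak{C}_{g^{-1}\cdot gh}=\mathfrak{C}_h$. Then $Z\cong I\Box Z$ is a direct summand of $\big(X\Box X^{\sharp}\big)\Box Z\cong X\Box Y=M_{g,h}\big(X\boxtimes_{\mathfrak{C}_e}Y\big)$, so, $\mathfrak{C}_{gh}$ being Cauchy complete, $Z$ lies in the essential image. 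Hence $M_{g,h}$ is essentially surjective, and in particular nonzero.

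The main obstacle is to promote essential surjectivity to an equivalence, i.e.\ to verify that $M_{g,h}$ is fully faithful on $\operatorname{Hom}$-1-categories. I see two routes. The first leverages invertibility: since $\mathfrak{C}_g$ and $\mathfrak{C}_h$ are invertible $\mathfrak{C}_e$-bimodules \cite[Proposition~3.1.7]{DY23}, Proposition~\ref{prop:invertibilitycharacterization} makes them invertible $1$-morphisms $\mathfrak{C}_e\to\mathfrak{C}_e$ in $\mathbf{F2C}$, so their composite $\mathfrak{C}_g\boxtimes_{\mathfrak{C}_e}\mathfrak{C}_h$ is again invertible, as is $\mathfrak{C}_{gh}$; one then argues that an essentially surjective bimodule 2-functor between two invertible $\mathfrak{C}_e$-bimodule 2-categories is forced to be an equivalence, for instance by passing to adjoints and observing that the resulting endomorphism of the unit must be invertible, which can be pinned down by a dimension count since both bimodules carry the same dimension as $\mathfrak{C}_e$. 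The second, more hands-on route uses the explicit model recalled above: writing $\mathfrak{C}_g\simeq\mathbf{LMod}_{\mathfrak{C}_e}(A)$ and $\mathfrak{C}_h\simeq\mathbf{Mod}_{\mathfrak{C}_e}(B)$ for separable algebras $A,B$ in $\mathfrak{C}_e$, one has $\mathfrak{C}_g\boxtimes_{\mathfrak{C}_e}\mathfrak{C}_h\simeq\mathbf{Bimod}_{\mathfrak{C}_e}(A,B)$, and full faithfulness of $M_{g,h}$ on pure tensors reduces, through the adjunctions $\big(X^{\sharp}\Box-\big)\dashv\big(X\Box-\big)$ in $\mathfrak{C}$, to a bimodule-$\operatorname{Hom}$ computation matching the $\operatorname{Hom}$-description in the relative tensor product from \cite{D10}. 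Either way, once $M_{g,h}$ is an equivalence for all $g,h$, specializing to $h=g^{-1}$ gives $\mathfrak{C}_g\boxtimes_{\mathfrak{C}_e}\mathfrak{C}_{g^{-1}}\simeq\mathfrak{C}_e$ and, symmetrically, $\mathfrak{C}_{g^{-1}}\boxtimes_{\mathfrak{C}_e}\mathfrak{C}_g\simeq\mathfrak{C}_e$, exhibiting $\mathfrak{C}_g$ as an invertible $1$-morphism in $\mathbf{F2C}$ and hence, by Proposition~\ref{prop:invertibilitycharacterization}, as an invertible bimodule, which recovers the first assertion. I expect the verification of full faithfulness, with its attendant higher-coherence bookkeeping, to be the genuinely delicate point; essential surjectivity and the formal deductions are comparatively routine.
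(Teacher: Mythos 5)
Your overall architecture is reasonable and you have correctly isolated where the real work lies, but as written the proof has a genuine gap: neither of your two routes to full faithfulness is actually an argument. Route 1 rests on the unproven claim that an essentially surjective bimodule 2-functor between invertible $\mathfrak{C}_e$-$\mathfrak{C}_e$-bimodule 2-categories is automatically an equivalence, to be ``pinned down by a dimension count''; there is no off-the-shelf dimension theory for bimodule 2-categories that delivers this, and such a lemma would itself require work comparable to the proposition you are trying to prove. Route 2 names the right objects but stops exactly where the computation would have to happen: you never identify $\mathfrak{C}_{gh}$ with $\mathbf{Bimod}_{\mathfrak{C}_e}(A,B)$, nor do you check that your comparison functor $M_{g,h}$ agrees with the canonical equivalence $\mathbf{LMod}_{\mathfrak{C}_e}(A)\boxtimes_{\mathfrak{C}_e}\mathbf{Mod}_{\mathfrak{C}_e}(B)\simeq\mathbf{Bimod}_{\mathfrak{C}_e}(A,B)$ supplied by \cite{D10}.

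The paper closes this gap in a way that makes your decomposition into essential surjectivity plus full faithfulness unnecessary. Pick nonzero objects $X$ in $\mathfrak{C}_{g^{-1}}$ and $Y$ in $\mathfrak{C}_{h}$. By the proof of \cite[Theorem~5.4.3]{D8}, $A:=X\Box {}^{\sharp}X$ and $B:=Y\Box {}^{\sharp}Y$ are separable algebras lying in $\mathfrak{C}_e$, and $C\mapsto X\Box C$, $C\mapsto C\Box {}^{\sharp}Y$, $C\mapsto X\Box C\Box {}^{\sharp}Y$ are equivalences onto $\mathbf{LMod}_{\mathfrak{C}}(A)$, $\mathbf{Mod}_{\mathfrak{C}}(B)$, $\mathbf{Bimod}_{\mathfrak{C}}(A,B)$; keeping track of the grading, these restrict to equivalences $\mathfrak{C}_g\simeq\mathbf{LMod}_{\mathfrak{C}_e}(A)$, $\mathfrak{C}_h\simeq\mathbf{Mod}_{\mathfrak{C}_e}(B)$ and $\mathfrak{C}_{gh}\simeq\mathbf{Bimod}_{\mathfrak{C}_e}(A,B)$. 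Under these identifications the balanced 2-functor $\Box\colon\mathfrak{C}_g\times\mathfrak{C}_h\rightarrow\mathfrak{C}_{gh}$ becomes $(M,N)\mapsto M\Box N$, which by the proof of \cite[Theorem~2.2.4]{D10} is precisely the universal balanced 2-functor $\mathfrak{C}_g\times\mathfrak{C}_h\rightarrow\mathfrak{C}_g\boxtimes_{\mathfrak{C}_e}\mathfrak{C}_h$, and the claimed equivalence follows at once. Your Route 2 is the germ of exactly this argument; your essential surjectivity paragraph, while correct in spirit, is then not needed, and your concluding deduction of invertibility from the case $h=g^{-1}$ via Proposition~\ref{prop:invertibilitycharacterization} matches the paper's observation that the first assertion follows from the second.
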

\begin{proof}
The first part follows from the second, so we only prove the latter. For the second part, note that it is enough to show that the canonical 2-functor is an equivalence. To this end, pick any non-zero objects $X$ in $\mathfrak{C}_{g^{-1}}$, and $Y$ in $\mathfrak{C}_{h}$. It follows from the proof of \cite[Theorem~5.4.3]{D8} that~$Y\Box {^{\sharp}Y}$ is a separable algebra in $\mathfrak{C}$, and that the left $\mathfrak{C}$-module 2-functor $\mathfrak{C}\rightarrow \mathbf{Mod}_{\mathfrak{C}}\big(Y\Box {^{\sharp}Y}\big)$ given by $C\mapsto C\Box {^{\sharp}Y}$ is an equivalence. Likewise, $X\Box {^{\sharp}X}$ is a separable algebra in $\mathfrak{C}$, and the right~$\mathfrak{C}$-module 2-functor $\mathfrak{C}\rightarrow \mathbf{LMod}_{\mathfrak{C}}\big(X\Box {^{\sharp}X}\big)$ given by $C\mapsto X\Box C$ is an equivalence. In particular, we also find that the 2-functor $\mathfrak{C}\rightarrow \mathbf{Bimod}_{\mathfrak{C}}\big(X\Box {^{\sharp}X},Y\Box {^{\sharp}Y}\big)$ given by $C\mapsto X\Box C\Box {^{\sharp}Y}$ is an equivalence. Under these identifications, as was recalled above, it follows from the proof of~\cite[Theorem~2.2.4]{D10} that the canonical 2-functor $\mathfrak{C}\times\mathfrak{C}\rightarrow\mathfrak{C}\boxtimes_{\mathfrak{C}}\mathfrak{C}$ is identified with $\Box\colon\mathfrak{C}\times\mathfrak{C}\rightarrow \mathfrak{C}$, which is given by
\begin{gather*}
 \mathbf{LMod}_{\mathfrak{C}}\big(X\Box {^{\sharp}X}\big)\times\mathbf{Mod}_{\mathfrak{C}}\big(Y\Box {^{\sharp}Y}\big) \rightarrow \mathbf{Bimod}_{\mathfrak{C}}\big(X\Box {^{\sharp}X},Y\Box {^{\sharp}Y}\big),\qquad
(M,N) \mapsto M\Box N.
\end{gather*}
 But, by considering the restriction $\mathfrak{C}_e\hookrightarrow\mathfrak{C}$, the above equivalences restrict to equivalences $\mathfrak{C}_h\rightarrow \mathbf{Mod}_{\mathfrak{C}_e}\big(Y\Box {^{\sharp}Y}\big)$, $\mathfrak{C}_g\rightarrow \mathbf{LMod}_{\mathfrak{C}_e}\big(X\Box {^{\sharp}X}\big)$, and $\mathfrak{C}_{gh}\rightarrow \mathbf{Bimod}_{\mathfrak{C}_e}\big(X\Box {^{\sharp}X},Y\Box {^{\sharp}Y}\big)$. In particular, the canonical 2-functor $\mathfrak{C}_g\times\mathfrak{C}_h\rightarrow\mathfrak{C}_g\boxtimes_{\mathfrak{C}_e}\mathfrak{C}_h$ is identified with $\Box:\mathfrak{C}_g\times\mathfrak{C}_h\rightarrow\mathfrak{C}_{gh}$ as claimed.
\end{proof}

A similar argument as the one used in the proof of the above proposition yields the following result.

\begin{Corollary}\label{cor:invertibledual}
The canonical $\mathfrak{C}_e$-$\mathfrak{C}_e$-bimodule $2$-functor $\mathfrak{C}_{g^{-1}}\rightarrow \mathbf{Fun}_{\mathfrak{C}_e}(\mathfrak{C}_g,\mathfrak{C}_e)$ given by $D\mapsto \{C\mapsto D\Box C\}$ is an equivalence.
\end{Corollary}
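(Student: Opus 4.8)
The plan is to run the same module-theoretic argument as in Proposition~\ref{prop:gradedMoritaequivalence}, with the relative 2-Deligne tensor product replaced by the functor 2-category. First I would fix a non-zero object $W$ in $\mathfrak{C}_{g^{-1}}$ and set $A:=W\Box {}^{\sharp}W$. Since ${}^{\sharp}W$ lies in $\mathfrak{C}_g$, the object $A$ lies in $\mathfrak{C}_{g^{-1}g}=\mathfrak{C}_e$, and by the proof of \cite[Theorem~5.4.3]{D8} it is a separable algebra. Exactly as in the Proposition, the left $\mathfrak{C}$-module equivalence $C\mapsto C\Box {}^{\sharp}W$ and the right $\mathfrak{C}$-module equivalence $C\mapsto W\Box C$ restrict along $\mathfrak{C}_e\hookrightarrow\mathfrak{C}$ to equivalences
\[
\mathfrak{C}_{g^{-1}}\xrightarrow{\ \simeq\ }\mathbf{Mod}_{\mathfrak{C}_e}(A),\ \ D\mapsto D\Box {}^{\sharp}W,
\qquad
\mathfrak{C}_g\xrightarrow{\ \simeq\ }\mathbf{LMod}_{\mathfrak{C}_e}(A),\ \ C\mapsto W\Box C,
\]
the degrees working out since $D\Box{}^{\sharp}W$ and $W\Box C$ both land in $\mathfrak{C}_e$ for $D\in\mathfrak{C}_{g^{-1}}$ and $C\in\mathfrak{C}_g$. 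The second equivalence is one of right $\mathfrak{C}_e$-module 2-categories, which is the structure relevant here: a direct check shows that $C\mapsto D\Box C$ is a right $\mathfrak{C}_e$-module 2-functor, so $\mathbf{Fun}_{\mathfrak{C}_e}(\mathfrak{C}_g,\mathfrak{C}_e)$ is to be read as right $\mathfrak{C}_e$-module 2-functors.

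Next I would transport the functor 2-category along $\mathfrak{C}_g\simeq\mathbf{LMod}_{\mathfrak{C}_e}(A)$, so that $\mathbf{Fun}_{\mathfrak{C}_e}(\mathfrak{C}_g,\mathfrak{C}_e)\simeq\mathbf{Fun}_{\mathfrak{C}_e}(\mathbf{LMod}_{\mathfrak{C}_e}(A),\mathfrak{C}_e)$. The key input is the functor-category counterpart of the description of $\boxtimes_{\mathfrak{C}_e}$ recalled after Theorem~\ref{thm:Moritadual}: namely, that evaluation at the free module $\mathrm{ev}\colon F\mapsto F(A)$ defines an equivalence $\mathbf{Fun}_{\mathfrak{C}_e}(\mathbf{LMod}_{\mathfrak{C}_e}(A),\mathfrak{C}_e)\xrightarrow{\simeq}\mathbf{Mod}_{\mathfrak{C}_e}(A)$, where $F(A)$ carries the residual right $A$-action coming from right multiplication on $A$, with quasi-inverse $N\mapsto\{M\mapsto N\boxtimes_A M\}$ given by the relative tensor over $A$. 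This is the categorified Eilenberg--Watts statement, and establishing it — producing the right $A$-module structure on $F(A)$ from the right $\mathfrak{C}_e$-module data of $F$, and checking that $N\boxtimes_A(-)$ reconstructs $F$ together with all balancing and coherence 2-morphisms — is the step I expect to be the main obstacle.

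Finally I would verify that the two identifications are glued correctly. Under $\mathfrak{C}_g\simeq\mathbf{LMod}_{\mathfrak{C}_e}(A)$ the free module $A=W\Box{}^{\sharp}W$ corresponds to the object ${}^{\sharp}W\in\mathfrak{C}_g$; hence the composite
\[
\mathfrak{C}_{g^{-1}}\xrightarrow{\ D\mapsto\{C\mapsto D\Box C\}\ }\mathbf{Fun}_{\mathfrak{C}_e}(\mathfrak{C}_g,\mathfrak{C}_e)\xrightarrow{\ \mathrm{ev}\ }\mathbf{Mod}_{\mathfrak{C}_e}(A)
\]
sends $D$ to the value of $C\mapsto D\Box C$ at $C={}^{\sharp}W$, that is, to $D\Box{}^{\sharp}W$. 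This is precisely the equivalence of the first paragraph. Since $\mathrm{ev}$ is an equivalence by the Eilenberg--Watts identification, the two-out-of-three property forces the canonical 2-functor $\mathfrak{C}_{g^{-1}}\to\mathbf{Fun}_{\mathfrak{C}_e}(\mathfrak{C}_g,\mathfrak{C}_e)$ to be an equivalence, as claimed. An alternative, more abstract route would avoid Eilenberg--Watts: Proposition~\ref{prop:gradedMoritaequivalence} exhibits $\mathfrak{C}_{g^{-1}}$ as an inverse of the invertible bimodule $\mathfrak{C}_g$ in $\mathscr{B}{\rm r}\mathscr{P}{\rm ic}(\mathfrak{C}_e)$, and the dual $\mathbf{Fun}_{\mathfrak{C}_e}(\mathfrak{C}_g,\mathfrak{C}_e)$ of an invertible bimodule is its inverse via Proposition~\ref{prop:invertibilitycharacterization}; however, matching that abstract equivalence with the explicit assignment $D\mapsto\{C\mapsto D\Box C\}$ still requires essentially the computation above, so I would present the module-theoretic argument as primary.
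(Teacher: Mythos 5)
Your argument is essentially the paper's: the proof given there consists of the single remark that ``a similar argument as the one used in the proof of the above proposition yields the result,'' i.e., precisely the substitution of $\mathbf{Fun}_{\mathfrak{C}_e}(-,-)$ for $\boxtimes_{\mathfrak{C}_e}$ in the proof of Proposition~\ref{prop:gradedMoritaequivalence} that you carry out, with the same algebra $W\Box{}^{\sharp}W$ and the same restriction along $\mathfrak{C}_e\hookrightarrow\mathfrak{C}$. The categorified Eilenberg--Watts identification $\mathbf{Fun}_{\mathfrak{C}_e}\big(\mathbf{LMod}_{\mathfrak{C}_e}(A),\mathfrak{C}_e\big)\simeq\mathbf{Mod}_{\mathfrak{C}_e}(A)$ that you flag as the main obstacle does not need to be established from scratch: it is part of the Morita theory of fusion 2-categories developed in \cite{D8}, the same reference the Proposition's proof already relies on, and can simply be quoted.
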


\subsection{Brauer--Picard spaces and extensions}

Group-graded extensions of fusion 1-categories are parameterised by maps into the space of invertible bimodule 1-categories and their (invertible) higher morphisms \cite{ENO2}. Thanks to the existence of the 4-category $\mathbf{F2C}$ obtained in \cite{D10}, the corresponding spaces for our 2-categorical purposes are easy to define.

\begin{Definition}
Let $\mathfrak{C}$ be a fusion 2-category. The Brauer--Picard space of $\mathfrak{C}$ consists of the invertible objects and the invertible morphisms in the monoidal 3-category of finite semisimple $\mathfrak{C}$-$\mathfrak{C}$-bimodule 2-categories, that is, \begin{gather*}\mathscr{B}{\rm r}\mathscr{P}{\rm ic}(\mathfrak{C}):=\big(\mathbf{\operatorname{End}}_{\mathbf{F2C}}(\mathfrak{C})\big)^{\times}.\end{gather*}
\end{Definition}

\begin{Remark}
We will write $Br\operatorname{Pic}(\mathfrak{C}):= \pi_0(\mathscr{B}{\rm r}\mathscr{P}{\rm ic}(\mathfrak{C}))$, the Brauer--Picard group of $\mathfrak{C}$. It follows from \cite[Lemma~2.2.1]{D9} that $\Omega \mathbf{\operatorname{End}}_{\mathbf{F2C}}(\mathfrak{C})\simeq \mathscr{Z}(\mathfrak{C})$, the Drinfeld center of $\mathfrak{C}$, as defined in~\cite{Cr}. Thus, the homotopy groups of the space $\mathscr{B}{\rm r}\mathscr{P}{\rm ic}(\mathfrak{C})$ are given as follows:
 $$\renewcommand{\arraystretch}{1.2} \begin{tabular}{|c|c|c|c|c|}
\hline
$\pi_0$ & $\pi_1$ & $\pi_2$ & $\pi_3$\\
\hline
${\rm Br}\operatorname{Pic}(\mathfrak{C})$ & $\operatorname{Inv}(\mathscr{Z}(\mathfrak{C}))$ & $\operatorname{Inv}(\Omega\mathscr{Z}(\mathfrak{C}))$ & $\mathds{k}^{\times}$\\
\hline
\end{tabular}$$
\end{Remark}

\begin{Example}\label{ex:BrPic2Vect}
Taking $\mathfrak{C}=\mathbf{2Vect}$, we have that $\mathscr{B}{\rm r}\mathscr{P}{\rm ic}(\mathfrak{C})$ is the core of the symmetric monoidal 3-category of multifusion 1-categories and the homotopy groups of this core are well known
 $$\renewcommand{\arraystretch}{1.2}\begin{tabular}{|c|c|c|c|c|}
\hline
$\pi_0$ & $\pi_1$ & $\pi_2$ & $\pi_3$\\
\hline
$1$ & $0$ & $0$ & $\mathds{k}^{\times}$\\
\hline
\end{tabular}$$
\end{Example}

\begin{Example}
More generally, we may also take $\mathfrak{C}=\mathbf{2Vect}_G^{\pi}$, in which case we have
 $$\renewcommand{\arraystretch}{1.2}\begin{tabular}{|c|c|c|c|c|}
\hline
$\pi_0$ & $\pi_1$ & $\pi_2$ & $\pi_3$\\
\hline \\[-1.1em]
$H^3(G;\mathds{k}^{\times})\rtimes \operatorname{Out}(G)$ & $Z(G)\oplus H^2(G;\mathds{k}^{\times})$ & $\smash{\widehat{G}}$ & $\mathds{k}^{\times}$\\
\hline
\end{tabular}$$
We have used \smash{$\widehat{G}$} to denote the group of multiplicative characters of $G$, \cite[Proposition~2.4.1]{DY23} for the description of $\pi_0$, and the main theorem of \cite{KTZ} for $\pi_1$ and $\pi_2$ together with the identification~$\operatorname{Pic}(\mathbf{Rep}(G))\cong H^2(G;\mathds{k}^{\times})$, which is given, for instance, in \cite[Proposition~6.1]{DN}.
\end{Example}

The following example will be relevant in the next section.

\begin{Example}\label{ex:BrauerPicardSymmetric}
Let $\mathcal{E}$ be a symmetric fusion 1-category, and take $\mathfrak{C}=\mathbf{Mod}(\mathcal{E})$. In this case we have
$$\renewcommand{\arraystretch}{1.2} \begin{tabular}{|c|c|c|c|c|}
\hline
$\pi_0$ & $\pi_1$ & $\pi_2$ & $\pi_3$\\
\hline \\[-1.1em]
$\smash{\widetilde{\mathcal{M}{\rm ext}}}(\mathcal{E})\rtimes \operatorname{Aut}^{\operatorname{br}}(\mathcal{E})$ & $Z(\mathrm{Spec}(\mathcal{E}))\oplus \operatorname{Pic}(\mathcal{E})$ & $\operatorname{Inv}(\mathcal{E})$ & $\mathds{k}^{\times}$\\
\hline
\end{tabular}
$$
We have used \smash{$\widetilde{\mathcal{M}{\rm ext}}(\mathcal{E})$} to denote the group of Witt-trivial minimal non-degenerate extensions of $\mathcal{E}$: The description of $\pi_0$ then follows from \cite[Corollary~3.1.7]{D9}. The statement for $\pi_2$ follows from of \cite[Lemma~2.16]{JFR}. Finally, the description of $\pi_1$ follows from \cite[Corollary~6.11]{DN}. Let us also point out that the group $\operatorname{Pic}(\mathcal{E})$ with $\mathcal{E}$ super-Tannakian was computed in \cite[Theorem~6.5]{DN}.
\end{Example}

A more general version of the next result has been announced \cite{JFR3}. We are very grateful to them for outlining their proof to us, which has inspired the argument that we give below.

\begin{Theorem}\label{thm:extensiontheory}
Let $G$ be a finite group, and $\mathfrak{C}$ be a fusion $2$-category. Then, faithfully $G$-graded extensions of $\mathfrak{C}$ are classified by homotopy classes of maps $\mathrm{B}G\rightarrow \mathrm{B}\mathscr{B}{\rm r}\mathscr{P}{\rm ic}(\mathfrak{C})$.
\end{Theorem}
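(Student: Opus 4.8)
The plan is to categorify the strategy of \cite{ENO2} by working in the Morita 4-category $\mathbf{F2C}$ and exploiting the fact that $\mathscr{B}\mathrm{r}\mathscr{P}\mathrm{ic}(\mathfrak{C})$ is, by definition, the maximal subgroupoid (the invertible part) of the monoidal 3-category $\mathbf{End}_{\mathbf{F2C}}(\mathfrak{C})$. Since the relative 2-Deligne tensor product $\boxtimes_{\mathfrak{C}}$ makes this into a group-like monoidal space, its delooping $\mathrm{B}\mathscr{B}\mathrm{r}\mathscr{P}\mathrm{ic}(\mathfrak{C})$ is a well-defined connected space, and a map $\mathrm{B}G\to \mathrm{B}\mathscr{B}\mathrm{r}\mathscr{P}\mathrm{ic}(\mathfrak{C})$ is the same datum as a monoidal functor from the discrete monoidal groupoid $G$ into $\mathscr{B}\mathrm{r}\mathscr{P}\mathrm{ic}(\mathfrak{C})$, i.e.\ a ``categorical action'' of $G$ by invertible $\mathfrak{C}$-$\mathfrak{C}$-bimodule 2-categories. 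The goal is to establish a bijection between homotopy classes of such maps and equivalence classes of faithfully $G$-graded extensions of $\mathfrak{C}$.

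First I would construct the forward map: given a faithfully $G$-graded extension $\mathfrak{D}=\boxplus_{g\in G}\mathfrak{D}_g$ with $\mathfrak{D}_e\simeq \mathfrak{C}$, Proposition~\ref{prop:gradedMoritaequivalence} shows each $\mathfrak{D}_g$ is an invertible $\mathfrak{C}$-$\mathfrak{C}$-bimodule 2-category, hence an object of $\mathscr{B}\mathrm{r}\mathscr{P}\mathrm{ic}(\mathfrak{C})$, and that the monoidal product induces coherent equivalences $\mathfrak{D}_g\boxtimes_{\mathfrak{C}}\mathfrak{D}_h\xrightarrow{\simeq}\mathfrak{D}_{gh}$. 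Unpacking the associativity and unit coherence data of $\mathfrak{D}$ as a monoidal 2-category produces exactly the higher coherence cells required for a monoidal functor $G\to\mathscr{B}\mathrm{r}\mathscr{P}\mathrm{ic}(\mathfrak{C})$, that is, a based map $\mathrm{B}G\to\mathrm{B}\mathscr{B}\mathrm{r}\mathscr{P}\mathrm{ic}(\mathfrak{C})$; equivalences of extensions manifestly yield homotopic maps.

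For the inverse direction I would start from a monoidal functor $G\to\mathscr{B}\mathrm{r}\mathscr{P}\mathrm{ic}(\mathfrak{C})$, producing invertible bimodule 2-categories $\mathfrak{M}_g$ for each $g$, equivalences $\mu_{g,h}\colon\mathfrak{M}_g\boxtimes_{\mathfrak{C}}\mathfrak{M}_h\xrightarrow{\simeq}\mathfrak{M}_{gh}$, and invertible coherence modifications witnessing associativity and unitality. I would then \emph{assemble} $\mathfrak{D}:=\boxplus_{g\in G}\mathfrak{M}_g$ and endow it with a monoidal structure whose product on homogeneous components is $\mathfrak{M}_g\times\mathfrak{M}_h\to\mathfrak{M}_g\boxtimes_{\mathfrak{C}}\mathfrak{M}_h\xrightarrow{\mu_{g,h}}\mathfrak{M}_{gh}$, with pentagonator and other structure supplied by the coherence cells of the functor. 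One must verify that $\mathfrak{D}$ is again a fusion 2-category: local finite semisimplicity and finiteness of simple objects are inherited from the finitely many $\mathfrak{M}_g$, while the existence of duals follows from invertibility of each $\mathfrak{M}_g$ (using, e.g., Corollary~\ref{cor:invertibledual} to identify $\mathfrak{M}_{g^{-1}}$ with the dual of $\mathfrak{M}_g$ and thereby construct dual objects). The two constructions are then checked to be mutually inverse up to equivalence, and to be compatible with homotopies, yielding the claimed bijection.

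The main obstacle I expect is the bookkeeping of \emph{coherence} in both directions. On the geometric side, a map out of $\mathrm{B}G$ encodes not merely the equivalences $\mu_{g,h}$ but an infinite tower of higher simplices; since $\mathscr{B}\mathrm{r}\mathscr{P}\mathrm{ic}(\mathfrak{C})$ is a 3-truncated space (its homotopy groups vanish above $\pi_3$, by the Remark following the definition of the Brauer--Picard space), only finitely many levels of data are nontrivial, but matching these precisely against the finitely many pieces of monoidal-2-category coherence data—the associator equivalence, the pentagonator, and the invertibility of the relevant modifications—is delicate. The cleanest route is to phrase everything as an equivalence of $\infty$-groupoids (or a comparison of the relevant mapping spaces in $\mathbf{F2C}$) rather than hand-building each coherence isomorphism, and to invoke the truncation to reduce the verification to a finite check; this is the step requiring the most care, and where following the argument outlined by the authors of \cite{JFR3} is essential.
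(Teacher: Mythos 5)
Your overall strategy coincides with the paper's: one direction is exactly an application of Proposition~\ref{prop:gradedMoritaequivalence}, and the other consists of assembling $\mathfrak{D}=\boxplus_{g\in G}\mathfrak{M}_g$ from the data of the map and checking that it is a fusion 2-category, with rigidity as the only non-obvious point. However, the step you explicitly defer --- matching the tower of higher simplices encoded by a map out of $\mathrm{B}G$ against the coherence data of a monoidal 2-category --- is precisely the content the paper supplies, and it is not handled by truncation or by hand-building a pentagonator. The device is the following piece of ``general nonsense'': let $\mathscr{C}$ be the monoidal $(\infty,1)$-category obtained from $\mathbf{\operatorname{End}}_{\mathbf{F2C}}(\mathfrak{C})$ by discarding the non-invertible $n$-morphisms for $n\geq 2$, so that $\mathscr{B}{\rm r}\mathscr{P}{\rm ic}(\mathfrak{C})=\mathscr{C}^{\times}$. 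Algebras in $\mathscr{C}$ are lax monoidal functors $*\rightarrow\mathscr{C}$, and a $G$-graded algebra is a factorization of such a functor through the canonical algebra $A[G]=\coprod_{g\in G}g$ in the coproduct completion $G^{\sqcup}$; since $\mathscr{C}$ has direct sums, lax monoidal functors $G^{\sqcup}\rightarrow\mathscr{C}$ are the same as lax monoidal functors $G\rightarrow\mathscr{C}$. A map $\mathrm{B}G\rightarrow\mathrm{B}\mathscr{B}{\rm r}\mathscr{P}{\rm ic}(\mathfrak{C})$ is a strongly monoidal functor $F\colon G\rightarrow\mathscr{C}$, and $\mathfrak{D}:=F(A[G])$ is then automatically a $G$-graded algebra in $\mathscr{C}$, i.e., a faithfully $G$-graded monoidal 2-category, with no coherence bookkeeping whatsoever; conversely an extension yields a lax monoidal $F$ which Proposition~\ref{prop:gradedMoritaequivalence} upgrades to a strongly monoidal functor landing in $\mathscr{C}^{\times}$. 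Without some such mechanism your ``assemble and endow with a pentagonator'' step is a genuine hole, not a routine verification.

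A second, smaller point concerns rigidity. Your appeal to Corollary~\ref{cor:invertibledual} is circular as stated, since that corollary is proved for a faithfully graded \emph{fusion} 2-category, which is exactly what you are trying to produce; moreover, knowing that $\mathfrak{M}_{g^{-1}}$ is the dual \emph{bimodule} of $\mathfrak{M}_g$ does not immediately give duals for the individual objects of $\mathfrak{M}_g$. The paper bridges this as follows: $\mathbf{\operatorname{End}}_{\mathfrak{C}}(\mathfrak{D})$ is multifusion by Theorem~\ref{thm:Moritadual}, so the left $\mathfrak{C}$-module endofunctor $R_X=(-)\Box X$ attached to a simple $X$ in $\mathfrak{D}_g$ admits a right adjoint $R_X^*$; because $F$ is strongly monoidal it preserves duals, so $\mathfrak{D}_{g^{-1}}\rightarrow\mathbf{Fun}_{\mathfrak{D}_e}(\mathfrak{D}_g,\mathfrak{D}_e)$ is an equivalence of bimodule 2-categories, whence $R_X^*\simeq Y\Box(-)$ for some $Y$ in $\mathfrak{D}_{g^{-1}}$, and this $Y$ is the desired dual of $X$. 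You should incorporate this adjoint-functor step explicitly rather than cite the corollary.
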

\begin{proof}
For the purpose of this proof, it will be convenient to think of maps of spaces $\mathrm{B}G\rightarrow \mathrm{B}\mathscr{B}{\rm r}\mathscr{P}{\rm ic}(\mathfrak{C})$ as monoidal maps $G\rightarrow \mathscr{B}{\rm r}\mathscr{P}{\rm ic}(\mathfrak{C})$ of $(\infty,1)$-categories. We will also consider the $(\infty,1)$-category $\mathscr{C}$ obtained from $\mathbf{\operatorname{End}}_{\mathbf{F2C}}(\mathfrak{C})$ by only considering the invertible $n$-morphisms when $n\geq 2$. Then, by definition we have $\mathscr{B}{\rm r}\mathscr{P}{\rm ic}(\mathfrak{C})=\mathscr{C}^{\times}$ as monoidal $(\infty,1)$-categories.

We begin the proof by some general nonsense. Recall that algebras in the monoidal $(\infty,1)$-category $\mathscr{C}$ correspond precisely to lax monoidal functors $*\rightarrow \mathscr{C}$ (see \cite[Example~2.2.6.10]{L2}). We want a similar description for the notion of a (faithfully) $G$-graded algebra. In order to do so, consider the monoidal 1-category $G^{\sqcup}$, which is the coproduct completion of $G$. Now, there is a canonical algebra $A[G]$ in $G^{\sqcup}$ given by
 \begin{gather*}
A[G]:=\coprod_{g\in G}g,
\end{gather*}
 or equivalently a lax monoidal functor $*\rightarrow G^{\sqcup}$. Then, giving a $G$-grading on an algebra $A$ in $\mathscr{C}$ is equivalent to providing a factorization of lax monoidal functors
\begin{displaymath}
\begin{tikzcd}
* \arrow[rr,"A"] \arrow[rd, "A\lbrack G \rbrack"'] & & \mathscr{C} \\
 & G^{\sqcup}. \arrow[ru] &
\end{tikzcd}\end{displaymath}
But, given that $\mathscr{C}$ has direct sums, lax monoidal functors $G^{\sqcup}\rightarrow \mathscr{C}$ correspond exactly to lax monoidal functors $G\rightarrow \mathscr{C}$.

Now, given a map of spaces $\mathrm{B}G\rightarrow \mathrm{B}\mathscr{B}{\rm r}\mathscr{P}{\rm ic}(\mathfrak{C})$, or equivalently a strongly monoidal functor $F\colon G\rightarrow \mathscr{C}$, we obtain a $G$-graded algebra $\mathfrak{D}:=F(A[G])$ in $\mathfrak{C}$. We also write
\begin{gather*}
\mathfrak{D}=\boxplus_{g\in G}\mathfrak{D}_g=\boxplus_{g\in G}F(g).
\end{gather*}
 In fact, as $F(e) = \mathfrak{C}$ by definition, the monoidal 2-category $\mathfrak{D}$ is a faithfully $G$-graded extension of $\mathfrak{C}$. It is therefore enough to prove that $\mathfrak{D}$ is a fusion 2-category. The only property that is not obvious is rigidity. We have that $\mathbf{\operatorname{End}}_{\mathfrak{C}}(\mathfrak{D})$ is a fusion 2-category thanks to Theorem~\ref{thm:Moritadual} above. Let us fix $g\in G$. For any simple object $X$ in $\mathfrak{D}_g$, we can consider the left $\mathfrak{C}$-module 2-functor~${R_X\colon \mathfrak{D}\rightarrow \mathfrak{D}}$ given by $D\mapsto D\Box X$ on ${\mathfrak{D}_e\simeq \mathfrak{C}}$, and zero on $\mathfrak{D}_h$ for any~${e\neq h\in G}$. As the objects of the monoidal 2-category $\mathbf{\operatorname{End}}_{\mathfrak{C}}(\mathfrak{D})$ have duals, we can consider the right adjoint~$R_X^*$ of $R_X$. But, we have that $\mathfrak{D}_{g^{-1}}\rightarrow \mathbf{Fun}_{\mathfrak{D}_e}(\mathfrak{D}_g,\mathfrak{D}_e)$ is an equivalence of $\mathfrak{D}_e$-$\mathfrak{D}_e$-bimodule 2-categories as~$F$ is strongly monoidal, and therefore preserves duals. Thus, we find that ${R_X^*\simeq Y\Box (-)\colon \mathfrak{D}_g\rightarrow \mathfrak{D}_e=\mathfrak{C}}$ for some $Y$ in $\mathfrak{D}_{g^{-1}}$. This proves that $X$ has a left dual $Y$ in~$\mathfrak{D}$. One shows analogously that $X$ has a right dual.

Conversely, given $\mathfrak{D}=\boxplus_{g\in G}\mathfrak{D}_g$ a $G$-graded extension of $\mathfrak{C}= \mathfrak{D}_e$, we can consider the corresponding lax monoidal functor $F\colon G\rightarrow \mathscr{C}$. More precisely, we set $F(g) = \mathfrak{D}_g$ with lax monoidal structure given by the monoidal structure of $\mathfrak{D}$. We want to show that $F$ is strongly monoidal and factors through $\mathscr{B}{\rm r}\mathscr{P}{\rm ic}(\mathfrak{C})\subset \mathscr{C}$. It follows from the definition of an extension that $F$ is strongly unital. Proposition \ref{prop:gradedMoritaequivalence} above establishes that $F$ has the remaining desired properties.
\end{proof}

\begin{Remark}
The classification of Tambara--Yamagami 1-categories may be recovered from extension theory for fusion 1-categories as explained in \cite[Section~9.2]{ENO2}. It would similarly be interesting to understand the classification of Tambara--Yamagami 2-categories obtained in \cite[Proposition~5.2.3]{DY23} in terms of the extension theory of fusion 2-categories. Namely, Tambara--Yamagami 2-categories are by definition $\mathbb{Z}/2$-graded extensions of $\mathbf{2Vect}(A[1]\times A[0])$ by $\mathbf{2Vect}$ for some finite abelian group $A$. However, there is a complication that arises with the case of Tambara--Yamagami 2-categories: The group $H^5(\mathrm{B}\mathbb{Z}/2;\mathds{k}^{\times})\cong\mathbb{Z}/2$ is non-zero. But, in order to construct a map of spaces $\mathrm{B}\mathbb{Z}/2\rightarrow \mathrm{B}\mathscr{B}{\rm r}\mathscr{P}{\rm ic}(\mathbf{2Vect}(A[1]\times A[0]))$ one has to check that a certain obstruction class living in $H^5(\mathrm{B}\mathbb{Z}/2;\mathds{k}^{\times})$ vanishes. We do not know how to do this directly. In fact, the vanishing of such obstructions is a well-known difficulty in the extension theory of fusion 1-categories. However, for Tamabra--Yamagami 1-categories the relevant group~${H^4(\mathrm{B}\mathbb{Z}/2;\mathds{k}^{\times})=0}$ is trivial. Relatedly, general results guaranteeing the vanishing of this obstruction for fusion 1-categories are known such as \cite[Theorem~8.16]{ENO2}. We wonder whether such a criterion may be established for extensions of higher fusion categories.
\end{Remark}

\begin{Remark}
Over an arbitrary field, some results on extension theory for finite semisimple tensor 1-categories were obtained in \cite{San}. Going up one categorical level, one can setup extension theory for \textit{locally separable} compact semisimple tensor 2-categories over an arbitrary field. Namely, the reference \cite{D10} does work at this level of generality, and the proofs of both Proposition~\ref{prop:gradedMoritaequivalence} and Theorem~\ref{thm:extensiontheory} continue to hold up to the obvious modifications.
\end{Remark}

\subsection{Extensions from crossed braided fusion 1-categories}

We now review \cite[Construction~2.1.23]{DR}, which will later allow us to give very concrete models for some fermionic strongly fusion 2-categories. A related construction appeared in \cite[Section~6]{Cui}. In a slightly different direction, we also refer the reader to \cite{JPR} for a detailed discussion of the relation between $G$-crossed braided 1-categories and higher categories.

\begin{Construction}\label{con:gradedF2CfromCBF1C}
Fix $G$ a finite group, and let $\mathcal{C}$ be a (not necessarily faithfully graded) $G$-crossed braided fusion 1-category. Following the notations of \cite[Section~8.24]{EGNO}, $\mathcal{C}$ is a~(not necessarily faithfully) $G$-graded fusion 1-category $\mathcal{C}=\boxplus_{g\in G}\mathcal{C}_g$ equipped with a $G$-action~${g\mapsto T_g}$ such that $T_g(\mathcal{C}_h)\subseteq \mathcal{C}_{ghg^{-1}}$ together with suitably coherent natural isomorphisms $c_{W,X}\colon W\otimes X\cong T_g(X)\otimes W$ whenever $W$ is in $\mathcal{C}_g$. For simplicity, and without loss of generality, we will assume that the underlying monoidal 1-category of $\mathcal{C}$ is strict.

We can then consider the monoidal 2-category $\widehat{\mathfrak{D}}$ whose set of objects is the finite group~$G$, and with $\operatorname{Hom}$-1-categories given by $\operatorname{Hom}_{\widehat{\mathfrak{D}}}(g,h):=\mathcal{C}_{hg^{-1}}$.
 Composition of 1-morphisms is given by the tensor product $\otimes$ of $\mathcal{C}$. Then, the monoidal 2-functor~$\Box$ is defined by
 \begin{gather*}
\Box\colon\ \operatorname{Hom}_{\widehat{\mathfrak{D}}}(g_1,h_1)\times \operatorname{Hom}_{\widehat{\mathfrak{D}}}(g_2,h_2)\rightarrow \operatorname{Hom}_{\widehat{\mathfrak{D}}}(g_1g_2,h_1h_2),\qquad (W,X) \mapsto W\otimes T_{g_1}(X)
\end{gather*}
and its naturality constraints are given by the $G$-crossed braided structure of $\mathcal{C}$. More precisely, for any 1-morphisms $W\colon g_1\rightarrow h_1$, $X\colon g_2\rightarrow h_2$, $Y\colon h_1\rightarrow f_1$, $Z\colon h_2\rightarrow f_2$, the interchanger $\phi^{\Box}$ is given by
\begin{gather*}
\smash{\phi^{\Box}_{(W,X),(Y,Z)}\colon\ (YT_{h_1}(Z))(WT_{g_1}(X))\xrightarrow{c^{-1}_{W,Z}}YWT_{g_1}(Z)T_{g_1}(X)\xrightarrow{(\mu_{g_1})_{Z,X}}(YW)T_{g_1}(ZX)}.
\end{gather*}
Then, the associator 2-natural equivalence $\alpha$ is given on objects by $\alpha_{g_1,g_2,g_2}=\operatorname{Id}_{g_1g_2g_3}$, and on 1-morphisms $W\colon g_1\rightarrow h_1$, $X\colon g_2\rightarrow h_2$, $Y\colon g_3\rightarrow h_3$ by
\begin{gather*}
\smash{\alpha_{W,X,Y}\colon\ WT_{g_1}(X)T_{g_1g_2}(Y)\xrightarrow{(\gamma^{-1}_{g_1,g_2})_Y}WT_{g_1}(X)T_{g_1}(T_{g_2}(Y))\xrightarrow{(\mu_{g_1})_{X,Y}}WT_{g_1}(XT_{g_2}(Y))}.
\end{gather*}
Likewise, the 2-natural equivalences witnessing unitality are the obvious ones. The pentagonator as well as the other invertible modifications that have to be specified are all taken to be the identity ones. That these assignments yield a monoidal 2-category follow at once from the axioms of a $G$-crossed braided 1-category. Finally, we obtain a fusion 2-category $\mathfrak{D}$ by taking the Cauchy completion of $\widehat{\mathfrak{D}}$ in the sense of~\cite{GJF} (see also \cite{D1,DR}).
\end{Construction}

\begin{Remark}
Let us write $\mathrm{supp}_G(\mathcal{C})$ for the support of $\mathcal{C}$ in $G$, that is the subset of elements~${g\in G}$ such that $\mathcal{C}_g$ is non-zero. As $\mathcal{C}$ is $G$-crossed braided, $\mathrm{supp}_G(\mathcal{C})$ is a normal subgroup of $G$. It follows from the above construction that $\pi_0(\mathfrak{D})=G/\mathrm{supp}_G(\mathcal{C})$ as finite groups, and that $\mathfrak{D}$ is faithfully $G/\mathrm{supp}_G(\mathcal{C})$-graded.
\end{Remark}

\section{Fermionic strongly fusion 2-categories}\label{sec:FSF2Cs}

Recall that $\mathds{k}$ is an algebraically closed field of characteristic zero. It is well known that bosonic strongly fusion 2-categories are classified by a finite group $G$ and a 4-cocycle for $G$ with coefficients in $\mathds{k}^{\times}$. In fact, once we know that every simple object of such a fusion 2-category is invertible, as follows from \cite[Theorem~A]{JFY}, the problem becomes a straightforward application of Theorem~\ref{thm:extensiontheory} with Example~\ref{ex:BrPic2Vect}. At a Physical level of rigor, this was already observed in~\cite{LKW}.

Below, we will establish a similar classification of fermionic strongly fusion 2-categories. Recall that a \textit{fermionic strongly fusion} 2-category is a fusion 2-category $\mathfrak{C}$ such that $\Omega\mathfrak{C}=\mathbf{SVect}$. Then, it follows from \cite[Theorem~B]{JFY} that every simple object of $\mathfrak{C}$ is invertible. This was first observed in the physics literature \cite{LW}. As a consequence of this last fact, we find that $\pi_0(\mathfrak{C})$ is a finite group, and there is a central extension of finite groups
\begin{gather*}
0\rightarrow \mathbb{Z}/2\rightarrow \operatorname{Inv}(\mathfrak{C})\rightarrow \pi_0(\mathfrak{C})\rightarrow 1.
\end{gather*}
 Namely, we have $\operatorname{Inv}(\mathbf{2SVect})\cong \mathbb{Z}/2$. Examples are known for which the associated short exact sequence is not split (see \cite[Example 2.1.27]{DR} or Figure \ref{fig:exoticFSF2C} below).

\subsection[The Brauer--Picard space of 2SVect]{The Brauer--Picard space of $\mathbf{2SVect}$}\label{sub:fermionicclassification}

It follows from Example~\ref{ex:BrauerPicardSymmetric} above that the homotopy groups of the space $\mathscr{B}{\rm r}\mathscr{P}{\rm ic}(\mathbf{2SVect})$ are as follows $$\renewcommand{\arraystretch}{1.2} \begin{tabular}{|c|c|c|c|c|}
\hline
$\pi_0$ & $\pi_1$ & $\pi_2$ & $\pi_3$\\
\hline
$1$ & $\mathbb{Z}/2\oplus\mathbb{Z}/2$ & $\mathbb{Z}/2$ & $\mathds{k}^{\times}$\\
\hline
\end{tabular}$$
In order to completely characterize the space $\mathrm{B}\mathscr{B}{\rm r}\mathscr{P}{\rm ic}(\mathbf{2SVect})$, we have to understand its Postnikov $k$-invariants. They are precisely those of the braided monoidal 2-category $\mathscr{Z}(\mathbf{2SVect})^{\times}$, that is of the space $\mathrm{B}^2\mathscr{Z}(\mathbf{2SVect})^{\times}$, and these $k$-invariants can be determined using \cite{GJF0,JF2}. In particular, this space has non-zero homotopy groups exactly in degrees $2$, $3$, and $4$. So as to do this, we review some notation; For the most part we follow those used in \cite[Section~3.1]{JFR}.
\begin{itemize}\itemsep=0pt
 \item Given an abelian group $A$ and a non-negative integer $n$, we use $A[n]$ to denote the $n$-th Eilenberg--MacLane space associated to $A$.
 \item The cohomology ring $H^{\bullet}(\mathbb{Z}/2[n];\mathbb{Z}/2)$ is generated by an element in degree $n$ under cup products and the action of the Steenrod operations $\mathrm{Sq}^i$.
 \item We will also consider the cohomology groups $H^{\bullet}(\mathbb{Z}/2[n];\mathds{k}^{\times})$. Via the map $t\mapsto (-1)^t$ induced by the inclusion $\mathbb{Z}/2\hookrightarrow \mathds{k}^{\times}$, we can give names to all of the cohomology classes that we will consider in these groups.
\end{itemize}

\begin{Lemma}\label{lem:kinvariantsZ2SVect}
Let us write $c_2$ for the generator of $H^2((\mathbb{Z}/2\oplus 0)[2];\mathbb{Z}/2)$ and $m_2$ for the generator of $H^2((0\oplus \mathbb{Z}/2)[2];\mathbb{Z}/2)$. The first $k$-invariant of $\mathrm{B}^2\mathscr{Z}(\mathbf{2SVect})^{\times}$ is $c_2^2+c_2m_2$ in the group $H^4((\mathbb{Z}/2\oplus\mathbb{Z}/2)[2];\mathbb{Z}/2)$. Let us write \begin{gather*}
\smash{Y:=\mathrm{Fib}\big((\mathbb{Z}/2\oplus\mathbb{Z}/2)[2]\xrightarrow{c_2^2+c_2m_2}\mathbb{Z}/2[4]\big)},
\end{gather*}
 for the $($homotopy$)$ fiber, and $t_3$ for the generator of $H^3(\mathbb{Z}/2[3];\mathbb{Z}/2)$, the second $k$-invariant of~$\mathrm{B}^2\mathscr{Z}(\mathbf{2SVect})^{\times}$, which lives in $H^5(Y;\mathds{k}^{\times})$, is
$\sigma:=(-1)^{\mathrm{Sq}^2t_3 + t_3m_2}$.
\end{Lemma}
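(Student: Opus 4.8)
The plan is to regard $X:=\mathrm{B}^2\mathscr{Z}(\mathbf{2SVect})^{\times}$ as a three-stage Postnikov system with $\pi_2(X)=\mathbb{Z}/2\oplus\mathbb{Z}/2$, $\pi_3(X)=\mathbb{Z}/2$ and $\pi_4(X)=\mathds{k}^{\times}$, so that $X$ is pinned down by exactly two $k$-invariants: a first one $k_1\in H^4((\mathbb{Z}/2\oplus\mathbb{Z}/2)[2];\mathbb{Z}/2)$ and a second one $k_2\in H^5(Y;\mathds{k}^{\times})$. As a preliminary step I would record the cohomology of the base: combining the standard computation of $H^{\bullet}(\mathbb{Z}/2[2];\mathbb{Z}/2)$ (a polynomial algebra on $\iota_2$, $\mathrm{Sq}^1\iota_2$, $\mathrm{Sq}^2\mathrm{Sq}^1\iota_2,\dots$) with the Künneth formula, and using that $\mathbb{Z}/2[2]$ is $1$-connected, one finds $H^4((\mathbb{Z}/2\oplus\mathbb{Z}/2)[2];\mathbb{Z}/2)=\langle c_2^2, c_2m_2, m_2^2\rangle$, where $c_2^2=\mathrm{Sq}^2c_2$ and $m_2^2=\mathrm{Sq}^2m_2$. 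Thus determining $k_1$ amounts to fixing three $\mathbb{Z}/2$-coefficients.

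To compute $k_1$ I would exploit naturality under the two rank-one sub-braided-$2$-groups of $\mathscr{Z}(\mathbf{2SVect})^{\times}$ generated by the charge object $c$ and the magnetic object $m$ (these generate $\mathrm{Inv}(\mathscr{Z}(\mathbf{2SVect}))=\mathbb{Z}/2\oplus\mathbb{Z}/2$, matching the splitting $Z(\mathrm{Spec}(\mathbf{SVect}))\oplus\operatorname{Pic}(\mathbf{SVect})$ of Example~\ref{ex:BrauerPicardSymmetric}). The inclusion of a single generator induces a map $\mathrm{B}^2\langle c\rangle\to X$ along which $k_1$ restricts to the first $k$-invariant of the sub-$2$-group, with $m_2\mapsto 0$ killing the cross term. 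For the charge sector I would identify $\mathrm{B}^2\langle c\rangle$ with the extended supercohomology $2$-group attached to $\mathbf{2SVect}^{\times}$, whose first $k$-invariant is $\mathrm{Sq}^2$ by \cite{GJF0}; this yields the coefficient $1$ of $c_2^2$. For the magnetic sector I would argue that $m$ is bosonic, so the restriction vanishes and the coefficient of $m_2^2$ is $0$. Finally, the coefficient of $c_2m_2$ is the mutual braiding of $c$ and $m$, which is the nondegenerate pairing intrinsic to the $\mathbb{Z}/2$-gauge-theoretic structure of $\mathscr{Z}(\mathbf{2SVect})$, hence equal to $1$. Assembling these gives $k_1=c_2^2+c_2m_2$.

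For the second $k$-invariant I would first analyse $H^5(Y;\mathds{k}^{\times})$ through the Serre spectral sequence of the fibration $\mathbb{Z}/2[3]\to Y\to(\mathbb{Z}/2\oplus\mathbb{Z}/2)[2]$. Since the primary obstruction $k_1$ is trivialised on $Y$ by construction, the fibre class $t_3$ gives rise on $Y$ to the secondary-operation class $\mathrm{Sq}^2t_3$ (a lift of $\mathrm{Sq}^2\iota_3$, controlled via the Kudo transgression $\tau(\mathrm{Sq}^2\iota_3)=\mathrm{Sq}^2k_1$) together with the off-fibre class $t_3m_2$, and I would check that these are the only candidates contributing to $k_2$, ruling out spurious terms by restricting to the already-understood charge and magnetic subtheories. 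The value of $k_2$ is then extracted exactly as for $k_1$: the self-term $\mathrm{Sq}^2t_3$ is the secondary $k$-invariant of the extended supercohomology $2$-group, as computed in \cite{GJF0,JF2}, while the coefficient of the cross term $t_3m_2$ is fixed by the nontrivial braiding of the magnetic object with the fermion. Passing through the coefficient map $t\mapsto(-1)^t$ gives $k_2=(-1)^{\mathrm{Sq}^2t_3+t_3m_2}=\sigma$.

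The main obstacle I expect to be the determination of $k_2$, and in particular the genuinely homotopy-theoretic input that cannot be read off the homotopy groups alone: the secondary operation $\mathrm{Sq}^2t_3$ encodes the full braided-$2$-categorical coherence data (at the level of the pentagonator and the hexagonators) of $\mathscr{Z}(\mathbf{2SVect})$, and establishing that its coefficient is $1$ is precisely the content imported from the extended supercohomology computations of \cite{GJF0,JF2}. A secondary difficulty is purely bookkeeping: one must organise the Serre spectral sequence so as to identify $H^5(Y;\mathds{k}^{\times})$ exactly and control the indeterminacy inherent in $\mathrm{Sq}^2t_3$ as a secondary operation, ensuring that the asserted formula is an equality of well-defined classes rather than merely of cosets.
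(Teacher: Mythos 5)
Your overall architecture matches the paper's: both $k$-invariants are pinned down by restricting along sub-braided-2-groups of $\mathscr{Z}(\mathbf{2SVect})^{\times}$ and importing the computations of \cite{GJF0} and \cite{JFR}, and $H^5(Y;\mathds{k}^{\times})$ is computed via the Serre spectral sequence of $\mathbb{Z}/2[3]\to Y\to(\mathbb{Z}/2\oplus\mathbb{Z}/2)[2]$. For the first $k$-invariant your argument is essentially the paper's; the one difference is that you fix the cross term by asserting that the mutual double braiding of $c$ and $m$ is nontrivial, whereas the paper derives exactly this from the triviality of the restriction to the diagonal sub-2-group generated by $C\Box M$, which is supplied by \cite[Theorem~3.2]{JFR}. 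Your identification of the $c_2m_2$-coefficient with the polarization of the quadratic form is correct, but the nontriviality of the pairing needs a source rather than an appeal to ``$\mathbb{Z}/2$-gauge-theoretic structure''.

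The genuine gap is in the final step of determining $\sigma$. The group $H^5(Y;\mathds{k}^{\times})$ is not generated only by $(-1)^{\mathrm{Sq}^2t_3}$ and $(-1)^{t_3m_2}$: the spectral sequence leaves three further generators coming from the base, namely $(-1)^{\mathrm{Sq}^2\mathrm{Sq}^1c_2}$, $(-1)^{c_2\mathrm{Sq}^1m_2}$ and $(-1)^{\mathrm{Sq}^2\mathrm{Sq}^1m_2}$. Your plan to ``rule out spurious terms by restricting to the already-understood charge and magnetic subtheories'' does not close the argument: the combination $(-1)^{\mathrm{Sq}^2\mathrm{Sq}^1c_2+c_2\mathrm{Sq}^1m_2}$ pulls back to zero along the inclusion of the $M$-summand (which kills $c_2$) and, since $g^*\big((-1)^{\mathrm{Sq}^2\mathrm{Sq}^1c_2}\big)=(-1)^{\mathrm{Sq}^2\mathrm{Sq}^1t_2}=g^*\big((-1)^{c_2\mathrm{Sq}^1m_2}\big)$, also along the diagonal corresponding to $C\Box M$. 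So restriction determines $\sigma$ only up to this summand, leaving two candidates. The paper resolves the ambiguity by exhibiting a homotopy autoequivalence $\phi$ of $Y$ with $\phi^*(t_3)=t_3+\mathrm{Sq}^1c_2$ carrying one candidate to the other, so that the two possibilities define equivalent Postnikov data; without an argument of this kind, or some additional restriction separating the two classes, your proof does not establish the stated formula. Separately, the survival of the $(0,5)$-entry, which you propose to control via Kudo transgression, is handled in the paper by the edge map to $H^5(\mathbb{Z}/2[3];\mathds{k}^{\times})$ and the known $k$-invariant of $\mathrm{B}^3\mathbf{SVect}^{\times}$; either route should work.
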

\begin{proof}
The underlying 2-category of $\mathscr{Z}(\mathbf{2SVect})$ is depicted below (see, for instance, \cite{JFR}). $$\begin{tikzcd}
I \arrow[d, "\mathbf{Vect}"', bend right] \arrow["\mathbf{SVect}"', loop, distance=2em, in=125, out=55] & & M \arrow[d, "\mathbf{Vect}"', bend right] \arrow["\mathbf{Vect}_{\mathbb{Z}/2}"', loop, distance=2em, in=125, out=55] \\
C \arrow[u, "\mathbf{Vect}"', bend right] \arrow["\mathbf{Vect}_{\mathbb{Z}/2}"', loop, distance=2em, in=305, out=235] & & C\Box M \arrow[u, "\mathbf{Vect}"', bend right] \arrow["\mathbf{Vect}_{\mathbb{Z}/2}"', loop, distance=2em, in=305, out=235]
\end{tikzcd}$$ We want to describe the space $\mathrm{B}^2\mathscr{Z}(\mathbf{2SVect})^{\times}$. We begin by determining its first $k$-invariant. We have $\mathscr{Z}(\mathbf{2SVect})^0\simeq \mathbf{2SVect}$ as symmetric fusion 2-categories, so the restriction of the first $k$-invariant to the braided monoidal sub-2-category spanned by $C$ has to be $c_2^2$ by \cite{GJF0}. Furthermore, its restriction to both $M$ and $C\Box M$ has to be trivial by \cite[Theorem~3.2]{JFR} and its proof. This uniquely determines the first $k$-invariant as $c_2^2+c_2m_2$ in $H^4((\mathbb{Z}/2\oplus \mathbb{Z}/2)[2];\mathbb{Z}/2)$.

Now, let $Y$ be the space in the statement of the lemma. We want to determine the group $H^5(Y;\mathds{k}^{\times})$ and its generators. In order to do so, we use the Serre spectral sequence for $Y$
\begin{gather*}
E_2^{i,j}=H^i((\mathbb{Z}/2\oplus\mathbb{Z}/2)[2], H^j(\mathbb{Z}/2[3];\mathds{k}^{\times}))\Rightarrow H^{i+j}(Y;\mathds{k}^{\times}).
\end{gather*}
 The first entries of the $E_2$ page for this spectral sequence are given by
\begin{equation}\renewcommand{\arraystretch}{1.2}\label{eq:E2braided}
 \begin{array}{c|cccccccc}
 j \\
 5 & \mathbb{Z}/2 & 0 \\
 4 & 0 & 0 & 0 \\
 3 & \mathbb{Z}/2 & 0 & \mathbb{Z}/2^{\oplus 2} & \mathbb{Z}/2^{\oplus 2}\\
 2 & 0 & 0 & 0 & 0 & 0 \\
 1 & 0 & 0 & 0 & 0 & 0 & 0 \\
 0 & \mathds{k}^{\times}& 0 & \mathbb{Z}/2^{\oplus 2} & 0 & \mathbb{Z}/2^{\oplus 2}\oplus\mathbb{Z}/4& \mathbb{Z}/2^{\oplus 3} &\mathbb{Z}/2^{\oplus 4}\\
 \hline
 & 0 & 1 & 2 & 3 & 4 & 5 & 6 & \quad i.
\end{array}
\end{equation}

We will abstain from giving generators for all the non-zero entries. Let us nonetheless point out that the $(6,0)$ entry is generated by
\begin{gather*}
(-1)^{c_2^3},\qquad (-1)^{c_2^2m_2}=(-1)^{c_2m_2^2},\qquad (-1)^{m_2^3},\qquad (-1)^{\mathrm{Sq}^1c_2 \mathrm{Sq}^1m_2}.
\end{gather*}
 The equality follows from the fact that $(-1)^{\mathrm{Sq}^1}$ is the trivial map. Now, the $(5,0)$ entry automatically survives to $E_{\infty}$ as no differential can hit it. Then, the $d_2$ differential is trivial, and the~$d_3$ differential is given by $d_3\big((-1)^{t_3}\big)=(-1)^{c_2^2+c_2m_2}$ due to the $k$-invariant of $Y$. In particular, we find that
 \begin{gather*}
 d_3\big((-1)^{t_3c_2}\big)=(-1)^{c_2^3+c_2^2m_2},\qquad d_3\big((-1)^{t_3m_2}\big)=(-1)^{c_2^2m_2+c_2m_2^2}=0,
 \end{gather*}
 so that the $\mathbb{Z}/2$ summand of the $(2,3)$ entry that is generated by $(-1)^{t_3m_2}$ does survive to the~$E_{\infty}$ page. It remains to understand what happens to the entry in degree $(0,5)$. We will argue that this entry survives. Namely, we have that the space $Y$ is a truncation of~$\mathrm{B}^2\mathscr{Z}(\mathbf{2SVect})^{\times}$. But, we know that $\Omega\mathscr{Z}(\mathbf{2SVect})^{\times}=\mathbf{SVect}^{\times}$, and it is well known that the $k$-invariant of the space $\mathrm{B}^3\mathbf{SVect}^{\times}$ is the non-trivial class $(-1)^{\mathrm{Sq}^2t_3}$ in $H^5(\mathbb{Z}/2[3];\mathds{k}^{\times})$. Thus, the edge map~$H^5(Y;\mathds{k}^{\times})\rightarrow H^5(\mathbb{Z}/2[3];\mathds{k}^{\times})$ induced by the inclusion $\mathbb{Z}/2[3]\hookrightarrow Y$ is non-zero, and the claim follows. Therefore, we find that $H^5(Y;\mathds{k}^{\times})$ is generated by $(-1)^{\mathrm{Sq}^2t_3}$, $(-1)^{t_3m_2}$, $(-1)^{\mathrm{Sq}^2\mathrm{Sq}^1c_2}$, $(-1)^{c_2\mathrm{Sq}^1m_2}=(-1)^{m_2\mathrm{Sq}^1c_2}$, and $(-1)^{\mathrm{Sq}^2\mathrm{Sq}^1m_2}=(-1)^{m_2\mathrm{Sq}^1m_2}$.

We now turn our attention towards describing the second $k$-invariant $\sigma$ of $\mathrm{B}^2\mathscr{Z}(\mathbf{2SVect})^{\times}$. To this end, let us write
\begin{gather*}
X:=\mathrm{Fib}\big(\mathbb{Z}/2[2]\xrightarrow{0}\mathbb{Z}/2[4]\big).
\end{gather*}
 Then, there is an inclusion $f\colon X\hookrightarrow Y$ induced by the inclusion of the second summand $\mathbb{Z}/2[2]\hookrightarrow (\mathbb{Z}/2\oplus\mathbb{Z}/2)[2]$. (This corresponds to picking out the object $M$ of $\mathscr{Z}(\mathbf{2SVect})^{\times}$.) Let us note that this map is compatible with the spectral sequence \eqref{eq:E2braided} above. It was computed in \cite[Theorem~3.2]{JFR} that the pullback $f^*(\sigma) = (-1)^{\mathrm{Sq}^2t_3 + t_3m_2}$, so that $\sigma$ must contain at least the factors $(-1)^{\mathrm{Sq}^2t_3}$ and $(-1)^{t_3m_2}$, and cannot contain $(-1)^{\mathrm{Sq}^2\mathrm{Sq}^1m_2}$. In addition, there is another map $g\colon X\hookrightarrow Y$ induced by the diagonal inclusion $\mathbb{Z}/2[2]\hookrightarrow (\mathbb{Z}/2\oplus\mathbb{Z}/2)[2]$. (This corresponds to picking out the object $C\Box M$ of $\mathscr{Z}(\mathbf{2SVect})^{\times}$.) Again, it follows from \cite[Theorem~3.2]{JFR} that the pullback $g^*(\sigma) = (-1)^{\mathrm{Sq}^2t_3 + t_3m_2}$. But, by naturality of the Serre spectral sequence, we find that $g^*\big((-1)^{\mathrm{Sq}^2\mathrm{Sq}^1c_2}\big) = (-1)^{\mathrm{Sq}^2\mathrm{Sq}^1t_2} = g^*\big((-1)^{c_2\mathrm{Sq}^1m_2}\big)$ with $t_2$ the generator of $H^2(\mathbb{Z}/2[2];\mathbb{Z}/2)$, so that
 \begin{gather*}
 \sigma = (-1)^{\mathrm{Sq}^2t_3 + t_3m_2}\qquad \text{or}\qquad \sigma = (-1)^{\mathrm{Sq}^2t_3 + t_3m_2 + \mathrm{Sq}^2\mathrm{Sq}^1c_2 + c_2\mathrm{Sq}^1m_2}.
 \end{gather*}
 It turns out that these two possibilities are equivalent. More precisely, consider the homotopy autoequivalence $\phi\colon Y\simeq Y$ with $\phi^*(t_3) = t_3 + \mathrm{Sq}^1c_2$. Then, we have
 \begin{gather*}
 \phi^*\big((-1)^{\mathrm{Sq}^2t_3 + t_3m_2}\big) = (-1)^{\mathrm{Sq}^2t_3 + t_3m_2 + \mathrm{Sq}^2\mathrm{Sq}^1c_2 + c_2\mathrm{Sq}^1m_2},
 \end{gather*}
which concludes the proof.
\end{proof}

\subsection{Twisted supercohomology}

In order to be able to carry out computations, as well as to make contact with the existing literature, it is useful to unpack the data of the space $\mathscr{B}{\rm r}\mathscr{P}{\rm ic}(\mathbf{2SVect})$ in a different way and express the classification of fermionic strongly fusion 2-categories using supercohomology.

\begin{Definition}
Supercohomology is the generalized cohomology theory associated to the double loop space of the spectrum associated to $\mathbf{2SVect}^{\times}$. For any space $X$ and integer $n$, we write~$SH^n(X)$ for the group of homotopy classes of maps of spaces from $X$ to $\mathrm{B}^{n-2}\mathbf{2SVect}^{\times}$.
\end{Definition}

This generalized cohomology theory first appeared in the literature on symmetry protected topological phases \cite{GJF0, KT,WG}, where it is also referred to as extended supercohomology.\footnote{An earlier but distinct notion of supercohomology appeared in \cite{GW}. In our notations, what they are considering is the generalized cohomology theory associated to (the loop space of) the spectrum $\mathbf{SVect}^{\times}$, whose homotopy groups are $\mathbb{Z}/2$ and $\mathds{k}^{\times}$. This generalized cohomology theory is sometimes called restricted supercohomology. We will make no use of this notion.} More precisely, as we have already recalled in the proof of Lemma~\ref{lem:kinvariantsZ2SVect} above, the homotopy groups of~$\mathbf{2SVect}^{\times}$ are given by $\mathbb{Z}/2$, $\mathbb{Z}/2$, $\mathds{k}^{\times}$, and supercohomology is shifted so that $SH^0(\mathrm{pt})=\mathds{k}^{\times}$. In particular, for any space $X$ and integer $n$, it follows from the Atiyah--Hirzebruch spectral sequence
\begin{gather*}
E_2^{i,j}=H^i\big(X;SH^j(\mathrm{pt})\big)\Rightarrow SH^{i+j}(\mathrm{B}G)
\end{gather*}
 that the group $SH^n(X)$ has a filtration with successive subquotients $E^{n,0}_{\infty}$, $E^{n-1,1}_{\infty}$, and $E^{n-2,2}_{\infty}$. But, $E^{n-2,2}_{\infty}$ is a subgroup of $E^{n-2,2}_{2}=H^{n-2}(X;\mathbb{Z}/2)$, $E^{n-1,1}_{\infty}$ is a subquotient of $E^{n-1,1}_{2}=H^{n-1}(X;\mathbb{Z}/2)$, and \smash{$E^{n,0}_{\infty}$} is a quotient of \smash{$E^{n,0}_{2}=H^n(X;\mathds{k}^{\times})$}, so that a class $\pi$ in $SH^n(X)$ may, by abusing notations, be written as a triple $(\alpha,\beta,\gamma)$ with $\alpha\in H^{n-2}(X;\mathbb{Z}/2)$, $\beta\in H^{n-1}(X;\mathbb{Z}/2)$, and $\gamma\in H^n(X;\mathds{k}^{\times})$.

Now, for the purposes of classifying fermionic strongly fusion 2-categories, we will also need to consider twisted supercohomology. More precisely, the spectrum $\mathbf{2SVect}^{\times}$ has a non-trivial space of automorphisms. We will only be interested in the subspace given by
\begin{gather*}
\mathscr{A}ut^{\rm tens}(\mathbf{2SVect}) \simeq \mathscr{A}ut^{\operatorname{br}}(\mathbf{SVect})\simeq \mathbb{Z}/2[1].
\end{gather*}
 Concretely, this corresponds to the symmetric monoidal natural autoequivalence $\varphi$ of the identity functor on $\mathbf{SVect}$ which takes the value $-1$ on purely odd vector spaces. Then, as is explained in \cite[Section~4.1]{NSS}, the action of the higher group $\mathbb{Z}/2[1]$ on the spectrum $\mathbf{2SVect}^{\times}$ is encoded by the fiber sequence of spaces
 \begin{equation}\begin{tikzcd}[sep=small]
\mathrm{B}^{n-2}\mathbf{2SVect}^{\times} \arrow[r] & {\mathrm{B}^{n-2}\mathbf{2SVect}^{\times}//(\mathbb{Z}/2[1])} \arrow[d] \\
&{\mathrm{B}\mathbb{Z}/2[1]=\mathbb{Z}/2[2]}\label{eq:canonicalbundle}
\end{tikzcd}\end{equation}
 for large enough $n$.

\begin{Definition}\label{def:twistedSH}
Let $X$ be a space equipped with an action $\varpi\colon X\rightarrow \mathbb{Z}/2[2]$, and write $P\rightarrow X$ for the bundle with fiber $\mathrm{B}^{n-2}\mathbf{2SVect}^{\times}$ obtained by pulling back (\ref{eq:canonicalbundle}) along $\varpi$. The $\varpi$-twisted $n$-th supercohomology of $X$ is the group $SH^{n+\varpi}(X):=\Gamma_X(P)$ of homotopy classes of sections of the bundle $P\rightarrow X$.
\end{Definition}

Just as we have explained above in the untwisted case, by abusing notations, we will also write classes in twisted supercohomology groups as triples.
We now identify the total space $\mathrm{B}^2\mathbf{2SVect}^{\times}\allowbreak//(\mathbb{Z}/ 2[1])$.

\begin{Lemma}\label{lem:supercohomolohyfibration}
The canonical fibration $\mathrm{B}^2\mathbf{2SVect}^{\times}//(\mathbb{Z}/2[1])\rightarrow \mathbb{Z}/2[2]$ is isomorphic to the map~$\mathrm{B}^2\mathscr{Z}(\mathbf{2SVect})^{\times}\!\!\rightarrow\! \mathbb{Z}/2[2]$ collapsing the connected component of the identity of $\mathscr{Z}(\mathbf{2SVect})$.
\end{Lemma}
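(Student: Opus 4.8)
The plan is to identify both fibrations over $\mathbb{Z}/2[2]$ by exhibiting a map between their total spaces that restricts to an equivalence on fibers and covers the identity on the base. Both total spaces are $2$-truncated connected spaces (after appropriate delooping) with homotopy concentrated in degrees $2$, $3$, $4$, so the strategy is to match homotopy groups first and then match $k$-invariants, invoking the computations already carried out in Lemma~\ref{lem:kinvariantsZ2SVect}.

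First I would analyze the two long exact sequences of homotopy groups associated to the two fibrations with fiber $\mathrm{B}^2\mathbf{2SVect}^{\times}$, whose homotopy groups are $\mathbb{Z}/2$, $\mathbb{Z}/2$, $\mathds{k}^{\times}$ in degrees $2$, $3$, $4$. For the canonical bundle (\ref{eq:canonicalbundle}), the base $\mathbb{Z}/2[2]$ contributes a $\mathbb{Z}/2$ in degree $2$, so the total space $\mathrm{B}^2\mathbf{2SVect}^{\times}//(\mathbb{Z}/2[1])$ has homotopy groups $\mathbb{Z}/2\oplus\mathbb{Z}/2$, $\mathbb{Z}/2$, $\mathds{k}^{\times}$ in degrees $2$, $3$, $4$. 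On the other side, $\mathrm{B}^2\mathscr{Z}(\mathbf{2SVect})^{\times}$ has, by the homotopy-group table for $\mathscr{B}{\rm r}\mathscr{P}{\rm ic}(\mathbf{2SVect})$ and the identification $\Omega\mathbf{\operatorname{End}}_{\mathbf{F2C}}(\mathfrak{C})\simeq \mathscr{Z}(\mathfrak{C})$, precisely $\pi_2=\mathbb{Z}/2\oplus\mathbb{Z}/2$, $\pi_3=\mathbb{Z}/2$, $\pi_4=\mathds{k}^{\times}$; the collapsing map to $\mathbb{Z}/2[2]$ kills the connected component of the identity and the second $\mathbb{Z}/2$ in degree $2$ (corresponding to $\mathbf{SVect}$ inside $\Omega\mathscr{Z}(\mathbf{2SVect})$) so that its fiber again has homotopy $\mathbb{Z}/2$, $\mathbb{Z}/2$, $\mathds{k}^{\times}$. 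Thus both fibrations have equivalent fibers and isomorphic total-space homotopy, compatibly with the maps to the common base.

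Next I would pin down the equivalence on the nose by matching $k$-invariants. The content of Lemma~\ref{lem:kinvariantsZ2SVect} is exactly the computation of the two Postnikov $k$-invariants of $\mathrm{B}^2\mathscr{Z}(\mathbf{2SVect})^{\times}$: the first is $c_2^2+c_2m_2$ and the second is $\sigma=(-1)^{\mathrm{Sq}^2t_3+t_3m_2}$, where $m_2$ is the class pulled back from the base $\mathbb{Z}/2[2]$. The key observation is that the fiber inclusion sets $m_2=0$, recovering the $k$-invariants $c_2^2$ and $(-1)^{\mathrm{Sq}^2t_3}$ of $\mathrm{B}^2\mathbf{2SVect}^{\times}$ (the latter being the known $k$-invariant of $\mathrm{B}^3\mathbf{SVect}^{\times}$ used in the lemma). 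Conversely, the dependence of both $k$-invariants on $m_2$ encodes precisely the action of $\mathbb{Z}/2[1]$ on $\mathbf{2SVect}^{\times}$ governing the canonical bundle: the extension $c_2^2+c_2m_2$ of $c_2^2$ over the base and the twist $t_3m_2$ in $\sigma$ are the transgressed/twisted terms recorded by (\ref{eq:canonicalbundle}). I would make this identification explicit by comparing with the description of the action via $\varphi$ taking the value $-1$ on purely odd vector spaces, so that the two fibrations are classified by the same data over $\mathbb{Z}/2[2]$.

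The main obstacle I anticipate is verifying that the twisted $k$-invariant terms involving $m_2$ genuinely agree between the two constructions, rather than merely having the correct restriction to the fiber. Matching homotopy groups and fiber $k$-invariants only shows the fibers are equivalent; the twisting data over the base must be checked independently, since a priori the action of $\mathbb{Z}/2[1]$ encoded by (\ref{eq:canonicalbundle}) could differ from the one realized inside $\mathscr{Z}(\mathbf{2SVect})^{\times}$ by a nontrivial automorphism of the fiber. I expect this to be handled by the same naturality-of-the-Serre-spectral-sequence arguments already deployed in the proof of Lemma~\ref{lem:kinvariantsZ2SVect} — in particular by tracking the classes $c_2$ and $t_3$ and their $m_2$-multiples through the maps $f$ and $g$ — so that the equivalence of total spaces upgrades to an isomorphism of fibrations over $\mathbb{Z}/2[2]$.
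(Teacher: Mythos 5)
Your strategy inverts the logical structure of the problem at its key step, and the resulting gap is real. The statement to be proven is precisely that the canonical bundle \eqref{eq:canonicalbundle} and the collapse map out of $\mathrm{B}^2\mathscr{Z}(\mathbf{2SVect})^{\times}$ agree; your argument rests on the assertion that the cross-terms $c_2m_2$ and $t_3m_2$ in the $k$-invariants computed in Lemma~\ref{lem:kinvariantsZ2SVect} ``are the transgressed/twisted terms recorded by \eqref{eq:canonicalbundle}.'' But Lemma~\ref{lem:kinvariantsZ2SVect} only computes the $k$-invariants of the $\mathscr{Z}(\mathbf{2SVect})$ side; nowhere do you compute the $k$-invariants of the Borel construction $\mathrm{B}^2\mathbf{2SVect}^{\times}//(\mathbb{Z}/2[1])$ from the explicit description of $\varphi$, and naturality of the Serre spectral sequence (which in the proof of Lemma~\ref{lem:kinvariantsZ2SVect} is used to restrict classes along maps of bases, not to produce the twisting of a quotient construction) gives you no access to them. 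So the central identification is asserted rather than proven. There is also a small slip in your first paragraph: the collapse map kills the $\mathbb{Z}/2$ in $\pi_2$ generated by $C$ (the identity component) and is an isomorphism onto the summand generated by $M$; the class $\mathbf{SVect}$ lives in $\pi_3$, not $\pi_2$.

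The gap can be closed, and the repair is exactly the observation the paper builds its proof on: since $\mathscr{A}ut^{\rm tens}(\mathbf{2SVect})\simeq\mathbb{Z}/2[1]$, there are precisely two actions of $\mathbb{Z}/2[1]$ on $\mathbf{2SVect}^{\times}$, hence precisely two bundles over $\mathbb{Z}/2[2]$ with this fiber --- the trivial one and the canonical one --- so it suffices to show the $\mathscr{Z}(\mathbf{2SVect})$ side is not the product. Your $k$-invariant data does accomplish that last step: no automorphism of $\pi_2=\mathbb{Z}/2\oplus\mathbb{Z}/2$ compatible with the projection onto the $M$-summand carries $c_2^2$ to $c_2^2+c_2m_2$, so the fibration is non-trivial and therefore canonical. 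With the dichotomy in hand, however, the full force of Lemma~\ref{lem:kinvariantsZ2SVect} is overkill: the paper instead identifies the action directly as conjugation by the invertible object $M$, expressed via the braiding of $\mathscr{Z}(\mathbf{2SVect})$, and detects its non-triviality by the single computation $b_{M,e}\cdot b_{e,M}=-\operatorname{Id}_{M\Box e}$ from [JFR]. That route is both shorter and independent of the Postnikov analysis; I would recommend restructuring your argument around the dichotomy and using the cross-term $c_2m_2$ (or the double braiding) only as the non-triviality certificate.
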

\begin{proof}
As above, let $M$ denote an invertible object of $\mathscr{Z}(\mathbf{2SVect})$ that is not in the connected component of the identity. In order to identify the spaces $\mathrm{B}^2\mathbf{2SVect}^{\times}//(\mathbb{Z}/2[1])$ and $\mathrm{B}^2\mathscr{Z}(\mathbf{2SVect})^{\times}$, it is enough to show that the induced action of the invertible object $M$ in~$\mathscr{Z}(\mathbf{2SVect})$ on $\mathscr{Z}(\mathbf{2SVect})^0\simeq\mathbf{2SVect}$ is the canonical one.

The invertible object $M$ induces a braided monoidal 2-natural equivalence of $\mathbf{2SVect}$. More precisely, let us use $b$ to denote the braiding of $\mathscr{Z}(\mathbf{2SVect})$, which is an adjoint 2-natural equivalence equipped with a pseudo-inverse $b^{\bullet}$. Let us also write $u\colon I\simeq M\Box M$ for an adjoint equivalence witnessing that $M$ is invertible. Then, the 2-natural autoequivalence $t$ of the identity 2-functor on $\mathscr{Z}(\mathbf{2SVect})$ that is given on an object $X$ in $\mathscr{Z}(\mathbf{2SVect})$ by
\begin{gather*}
\smash{t_X\colon\ X\xrightarrow{X\Box u} X\Box M\Box M\xrightarrow{b_{X,M}\Box M} M\Box X\Box M\xrightarrow{M\Box b^{\bullet}_{X,M}} M\Box M\Box X\xrightarrow{u^{\bullet}\Box X}X},
\end{gather*}
 can be canonically upgraded to a braided monoidal 2-natural equivalence. In particular, we can restrict $t$ to a braided monoidal 2-natural equivalence of the identity 2-functor on $\mathbf{2SVect}$.

Now, as $\mathscr{A}ut^{\operatorname{br}}(\mathbf{2SVect})\simeq\mathscr{A}ut^{\rm tens}(\mathbf{2SVect})\simeq \mathbb{Z}/2[1]$, it is enough to check that this action is non-trivial. In order to see this, let $e$ denote the non-identity invertible 1-morphism in~$\Omega\mathbf{2SVect}=\mathbf{SVect}$. Then, it was explained in \cite[Section~3.1]{JFR} that the double braiding of the object $M$ and the 1-morphism $e$ is given by $b_{M,e}\cdot b_{e,M}=(-1) \operatorname{Id}_{M\Box e}$. This shows that~$\Omega t$ is the non-trivial braided monoidal autoequivalence of $\mathbf{SVect}$, and therefore concludes the proof.
\end{proof}

Combining the last lemma together with Theorem~\ref{thm:extensiontheory} yields the following result. We wish to point out that, relying on the as-of-yet incomplete theory of higher condensations \cite{GJF}, a~version of the classification of fermionic strongly fusion 2-categories has already been given in \cite[Section~V.D]{JF}. In the physics literature, an even earlier, albeit slightly incorrect, version appeared in \cite{LW}.

\begin{Theorem}\label{thm:classificationFSF2Cs}
Fermionic strongly fusion $2$-categories equipped with a faithful grading are classified by a finite group $G$ together with a class $\varpi$ in $H^2(\mathrm{B}G;\mathbb{Z}/2)$ and a class $\pi$ in $SH^{4+\varpi}(\mathrm{B}G)$.
\end{Theorem}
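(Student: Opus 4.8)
The plan is to deduce Theorem~\ref{thm:classificationFSF2Cs} by combining the general extension theory of Theorem~\ref{thm:extensiontheory} with the explicit identification of the delooping of the Brauer--Picard space of $\mathbf{2SVect}$ assembled in Lemma~\ref{lem:kinvariantsZ2SVect} and Lemma~\ref{lem:supercohomolohyfibration}. The starting observation is that a fermionic strongly fusion $2$-category $\mathfrak{C}$, being one for which $\Omega\mathfrak{C}=\mathbf{SVect}$, has all simple objects invertible by \cite[Theorem~B]{JFY}; hence $\pi_0(\mathfrak{C})=:G$ is a finite group and $\mathfrak{C}$ is a faithfully $G$-graded extension of its identity component $\mathfrak{C}_e$. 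One should first argue that $\mathfrak{C}_e\simeq \mathbf{2SVect}$, so that the classification reduces, via Theorem~\ref{thm:extensiontheory}, to computing homotopy classes of maps $\mathrm{B}G\rightarrow \mathrm{B}\mathscr{B}{\rm r}\mathscr{P}{\rm ic}(\mathbf{2SVect})$.

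The heart of the argument is then to re-express such maps in supercohomological terms. Here I would use that $\mathrm{B}\mathscr{B}{\rm r}\mathscr{P}{\rm ic}(\mathbf{2SVect})=\mathrm{B}^2\mathscr{Z}(\mathbf{2SVect})^{\times}$, whose homotopy groups and $k$-invariants were pinned down in Lemma~\ref{lem:kinvariantsZ2SVect}: the bottom homotopy group is $\pi_1=\mathbb{Z}/2\oplus\mathbb{Z}/2$, and the first $k$-invariant $c_2^2+c_2m_2$ exhibits the space as fibered over $\mathbb{Z}/2[2]$. The key structural input is Lemma~\ref{lem:supercohomolohyfibration}, which identifies the fibration $\mathrm{B}^2\mathscr{Z}(\mathbf{2SVect})^{\times}\rightarrow \mathbb{Z}/2[2]$ (collapsing the identity component) with the canonical twisting bundle $\mathrm{B}^2\mathbf{2SVect}^{\times}//(\mathbb{Z}/2[1])\rightarrow \mathbb{Z}/2[2]$ of (\ref{eq:canonicalbundle}). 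A map $\mathrm{B}G\rightarrow \mathrm{B}^2\mathscr{Z}(\mathbf{2SVect})^{\times}$ therefore consists of its composite to the base, which is precisely a class $\varpi\in H^2(\mathrm{B}G;\mathbb{Z}/2)$, together with a lift of $\varpi$ through the bundle. By the very Definition~\ref{def:twistedSH} of $\varpi$-twisted supercohomology as homotopy classes of sections of the pulled-back bundle $P\rightarrow \mathrm{B}G$, such lifts are exactly elements of $SH^{4+\varpi}(\mathrm{B}G)$ — the degree shift being dictated by the normalization $SH^0(\mathrm{pt})=\mathds{k}^{\times}$ together with the placement of the relevant homotopy groups in degrees $2,3,4$ of $\mathrm{B}^2\mathscr{Z}(\mathbf{2SVect})^{\times}$.

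Assembling these identifications gives the bijection: a faithfully $G$-graded fermionic strongly fusion $2$-category determines $(G,\varpi,\pi)$ with $\varpi\in H^2(\mathrm{B}G;\mathbb{Z}/2)$ and $\pi\in SH^{4+\varpi}(\mathrm{B}G)$, and conversely any such triple produces one via Theorem~\ref{thm:extensiontheory}. I expect the main obstacle to be the bookkeeping around the degree shift and twisting: one must verify carefully that the fiber sequence of Lemma~\ref{lem:supercohomolohyfibration} is the \emph{same} bundle, as a fibration over $\mathbb{Z}/2[2]$ with the correct $\mathbb{Z}/2[1]$-action, that enters Definition~\ref{def:twistedSH}, so that sections over $\mathrm{B}G$ genuinely compute $SH^{4+\varpi}(\mathrm{B}G)$ rather than some untwisted or differently-shifted variant. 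The remaining verifications — that the identity component must be $\mathbf{2SVect}$, and that the correspondence is compatible in both directions with the dictionary of Theorem~\ref{thm:extensiontheory} — are comparatively routine once this twisting identification is secured.
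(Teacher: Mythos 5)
Your proposal is correct and follows essentially the same route as the paper: reduce via Theorem~\ref{thm:extensiontheory} to homotopy classes of maps $\mathrm{B}G\rightarrow \mathrm{B}\mathscr{B}{\rm r}\mathscr{P}{\rm ic}(\mathbf{2SVect})\simeq\mathrm{B}^2\mathscr{Z}(\mathbf{2SVect})^{\times}$, then use Lemma~\ref{lem:supercohomolohyfibration} to split such a map into the composite $\varpi\colon\mathrm{B}G\rightarrow \mathbb{Z}/2[2]$ and a lift, which by Definition~\ref{def:twistedSH} is exactly a class in $SH^{4+\varpi}(\mathrm{B}G)$. The preliminary identification of the identity component with $\mathbf{2SVect}$, which you flag, is handled in the paper by the discussion at the start of Section~\ref{sec:FSF2Cs} via \cite[Theorem~B]{JFY}.
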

\begin{proof}
It follows from Theorem~\ref{thm:extensiontheory} that (faithfully) $G$-graded extensions of $\mathbf{2SVect}$ are classified by homotopy classes of maps
 \begin{gather*}
\mathrm{B}G\rightarrow \mathrm{B}\mathscr{B}{\rm r}\mathscr{P}{\rm ic}(\mathbf{2SVect})\simeq\mathrm{B}^2\mathscr{Z}(\mathbf{2SVect})^{\times}.
\end{gather*}
 The result then follows from the last lemma above. Namely, the class $\varpi$ is given by the composite~$\mathrm{B}G\rightarrow \mathrm{B}^2\mathscr{Z}(\mathbf{2SVect})^{\times}\rightarrow \mathrm{B}\mathbb{Z}/2[1]$, and endows $\mathrm{B}G$ with an action by $\mathbb{Z}/2[1]$. In addition, the data of a map $\mathrm{B}G\rightarrow \mathrm{B}^2\mathscr{Z}(\mathbf{2SVect})^{\times}$ lifting $\varpi$ is precisely that of a class in $SH^{4+\varpi}(\mathrm{B}G)$.
\end{proof}

\begin{Remark}
Without taking into account the faithful grading, fermionic strongly fusion 2-categories are classified by a finite group $G$ together with a class in $H^2(\mathrm{B}G;\mathbb{Z}/2)/\operatorname{Out}(G)$ and a class in $SH^{4+\varpi}(\mathrm{B}G)/\operatorname{Out}(G)$.
\end{Remark}

\begin{Remark}\label{rem:varpialphaunfolding}
As is clear from the proof, the finite group $G$ corresponds to the group of connected components of the fermionic strongly fusion 2-category $\mathfrak{C}$. We emphasize that the class~$\varpi$ in $H^2(\mathrm{B}G;\mathbb{Z}/2)$ is not the extension class determining the group of invertible objects $\operatorname{Inv}(\mathfrak{C})$ as a $\mathbb{Z}/2$ extension of $G$! This was first observed in the physics literature \cite{LW} and then given a~more mathematical treatment in \cite{JF2}, and transpires from Example~\ref{ex:FSF2CZ/2} below, but also from the result of Appendix~\ref{sec:computation}. Rather, the extension class is given by the bottom layer $\alpha$ of the class~${\pi=(\alpha,\beta,\gamma)}$ in $SH^{4+\varpi}(\mathrm{B}G)$. The class $\beta$ supplies the 1-morphisms witnessing associativity, and the class $\gamma$ gives the pentagonator. As for the class $\varpi$, it follows from the proof of Theorem~\ref{thm:classificationFSF2Cs} that it corresponds to the action of $G$ on $\mathfrak{C}^0=\mathbf{2SVect}$ by conjugation. Said differently, $\varpi$ encodes the data of the interchanger, or, equivalently, the 2-naturality of the associator. That these two pieces of coherence data are intimately related can be seen from~\cite[Proposition~4.2, equation~(A$\hat{a}$2)]{El1}. We expect that the action of $G$ on $\mathbf{2SVect}$ can be detected at the level of the Drinfeld center $\mathscr{Z}(\mathfrak{C})$ of $\mathfrak{C}$. More precisely, let us write \smash{$\big(\widetilde{G},z\big)$} for the central extension of~$G$ by~$\mathbb{Z}/2$ parameterised by $\varpi$, then let \smash{$\mathbf{Rep}\big(\widetilde{G},z\big)$} be the subcategory of super-representations of $\widetilde{G}$ on which $z$ acts as the parity automorphism. We expect that \smash{$\Omega\mathscr{Z}(\mathfrak{C})\simeq \mathbf{Rep}\big(\widetilde{G},z\big)$} as symmetric fusion 1-categories. We have checked this property for the fermionic strongly fusion 2-categories of Examples \ref{ex:actionandextension}, \ref{ex:actionnoextension} and \ref{ex:lin2group} below.
\end{Remark}

\begin{Remark}
Let us momentarily work over the field of real numbers $\mathbb{R}$. It is interesting to ask whether the classification of strongly fusion 2-categories remains valid at this level of generality. This turns out to be wrong, even in the bosonic case: The most general version of~\cite[Theorem~A]{JFY} does not hold over fields that are not algebraically closed. More precisely, it is not true that every fusion 2-category $\mathfrak{C}$ over $\mathbb{R}$ with $\Omega\mathfrak{C} = \mathbf{Vect}_{\mathbb{R}}$ is a group graded extension of~$\mathbf{2Vect}_{\mathbb{R}}$. The following counterexample was pointed out to us by Theo Johnson-Freyd. Let us consider the fusion 2-category $\mathbf{2Rep}_{\mathbb{R}}(\mathbb{Z}/3[2])$ of real 2-representations of the 2-group $\mathbb{Z}/3[2]$, that is, finite semisimple $\mathbb{R}$-linear 1-categories equipped with an action of $\mathbb{Z}/3[1]$. Over the complex numbers~$\mathbb{C}$, the notion of a 2-representation was first considered in \cite[Section 6]{El2} (see also \cite[Section 1.4.5]{DR} for a recent account). The underlying 2-category and fusion rules of the fusion 2-category $\mathbf{2Rep}_{\mathbb{R}}(\mathbb{Z}/3[2])$ are as depicted below.
 \begin{displaymath}
\begin{tabular}{c c}
$\begin{tikzcd}
& C \arrow[ld, bend right] \arrow[rd, bend right = 10] \arrow["\mathbf{Vect}^{\sigma}_{\mathbb{C}}(\mathbb{Z}/2)"', loop, distance=2em, in=125, out=55] & & X \arrow["\mathbf{Vect}_{\mathbb{C}}"', loop, distance=2em, in=125, out=55] \\
I \arrow[rr, bend right] \arrow[ru, bend right = 10] \arrow["\mathbf{Vect}_{\mathbb{R}}"', loop, distance=2em, in=215, out=145] & & H \arrow[lu, bend right] \arrow[ll, bend right = 5] \arrow["\mathbf{Vect}_{\mathbb{H}}"', loop, distance=2em, in=35, out=325] & \end{tikzcd}$&$\begin{tabular}{ |c| c c c c | }
 \hline
 $\Box$ & $I$ & $H$ & $C$ & $X$\\
 \hline
 $I$ & $I$ & $H$ & $C$ & $X$\\
 $H$ & $H$ & $I$ & $C$ & $X$\\
 $C$ & $C$ & $C$ & $2C$ & $2X$\\
 $X$ & $X$ & $X$ & $2X$ & $2C\boxplus X$\\
 \hline
\end{tabular}$
\end{tabular}\end{displaymath}
In particular, this shows that, unlike in the case of algebraically closed fields \cite[Example~1.4.22]{DR}, the fusion 2-categories $\mathbf{2Rep}_{\mathbb{R}}(\mathbb{Z}/3[2])$ and $\mathbf{2Vect}_{\mathbb{R}}(\mathbb{Z}/3)$ are not monoidally equivalent. Nevertheless, they become equivalent upon base extension to $\mathbb{C}$. This suggests that it is possible to classify real strongly fusion 2-categories by combining the known classification of strongly fusion 2-categories over an algebraically closed field of characteristic zero together with a 2-categorical version of the descent techniques of \cite{EG}.
\end{Remark}

\subsection{Examples}

We examine various special cases of the classification of fermionic strongly fusion 2-categories obtained above.

\begin{Example}\label{ex:topcocycleonly}
Let us take $G$ a finite group. Then, for any 4-cocycle $\gamma$ for $G$ with coefficients in $\mathds{k}^{\times}$, we can consider the fermionic strongly fusion 2-category $\mathbf{2SVect}\boxtimes \mathbf{2Vect}^{\gamma}(G)$. Their Drinfeld centers are completely understood thanks to \cite{KTZ} and \cite{JF2}, as taking Drinfeld centers commutes with 2-Deligne tensor products by \cite{D9}. Further, in the classification of Theorem~\ref{thm:classificationFSF2Cs} the corresponding data is $\varpi = \operatorname{triv}$, and the class $\pi$ in $SH^4(\mathrm{B}G)$ is given by $(\operatorname{triv}, \operatorname{triv}, \gamma)$. However, for a general group $G$, different 4-cocycles $\gamma$ may yield equivalent fusion 2-categories (see Example~\ref{ex:FSF2CZ2Z2} below). Finally, let us note that, when $G$ has odd order, these are all of the fermionic strongly fusion 2-categories. Namely, in this case, we have $H^2(\mathrm{B}G;\mathbb{Z}/2)\cong 0$ and~$SH^{4}(\mathrm{B}G)\cong H^4(\mathrm{B}G;\mathds{k}^{\times})$.
\end{Example}

\begin{Example}\label{ex:actionandextension}
Let \smash{$\big(\widetilde{G},z\big)$} be a finite super-group, that is a finite group $\widetilde{G}$ equipped with a~central element $z$ of order exactly 2. Let us write $\mathcal{B}$ for (any of) the braided Ising 1-categories. Such a braided fusion 1-category has a canonical $\mathbb{Z}/2$-grading $\mathcal{B}=\mathcal{B}_0\boxplus \mathcal{B}_1$ with $\mathcal{B}_0=\mathbf{SVect}$ and $\mathcal{B}_1=\mathbf{Vect}$. This allows us to consider $\mathcal{B}$ as a non-faithfully $\widetilde{G}$-crossed braided 1-category by setting $\mathcal{B}_e=\mathcal{B}_0$, $\mathcal{B}_z=\mathcal{B}_1$, $\mathcal{B}_g=0$ for any other $g\in\widetilde{G}$, and taking the trivial $\widetilde{G}$-action. Then, we can use \cite[Construction~2.1.23]{DR}, recalled above in Construction~\ref{con:gradedF2CfromCBF1C}, so as to obtain a monoidal 2-category $\widehat{\mathfrak{D}}$ whose group of objects is $\widetilde{G}$, and with $\operatorname{Hom}$-1-categories given by~\smash{$\operatorname{Hom}_{\widehat{\mathfrak{D}}}(g,h):=\mathcal{B}_{hg^{-1}}$}. We then get a fusion 2-category $\mathfrak{D}$ by taking the additive completion of~$\widehat{\mathfrak{D}}$. (In the specific case under consideration, the additive completion is the Cauchy completion.) Further, it is clear that $\operatorname{Inv}(\mathfrak{C})\cong \widetilde{G}$ and $\pi_0(\mathfrak{C})=\widetilde{G}/z$. The fermionic strongly fusion 2-category corresponding to $\widetilde{G}=\mathbb{Z}/4$ is depicted below in Figure~\ref{fig:exoticFSF2C}.

It is informative to understand how the fermionic strongly fusion 2-category $\mathfrak{D}$ constructed above fits into the classification of Theorem~\ref{thm:classificationFSF2Cs}. As was already pointed out, the corresponding group is $G:=\widetilde{G}/z$. Further, it follows from the construction of $\mathfrak{D}$ that the class~$\varpi$ in~$H^2(\mathrm{B}G;\mathbb{Z}/2)$ is the class corresponding to the central extension $\widetilde{G}$ of $G$. More precisely, one computes that $\Omega\mathscr{Z}(\mathfrak{D})\simeq\mathbf{Rep}\big(\widetilde{G},z\big)$. Finally, the class $\pi$ in $SH^{4+\varpi}(\mathrm{B}G)$ is given by $(\varpi, \operatorname{triv}, \operatorname{triv})$. In particular, the construction above does not depend on the choice of braided Ising 1-category~$\mathcal{B}$.
\end{Example}

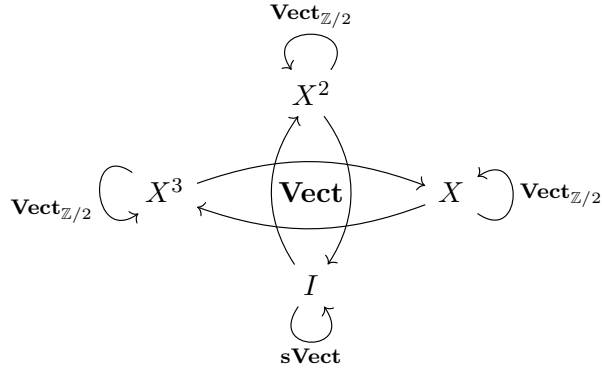
\begin{figure}[!hbt]\vspace{-5mm}
$$\begin{tikzcd}
& X^2 \arrow[dd, bend left=35] \arrow["\mathbf{Vect}_{\mathbb{Z}/2}"', loop, distance=2em, in=125, out=55] &\\
X^3 \arrow[rr, bend left = 20] \arrow["\mathbf{Vect}_{\mathbb{Z}/2}"', loop, distance=2em, in=215, out=145] & \mathbf{Vect}& X \arrow[ll, bend left = 20] \arrow["\mathbf{Vect}_{\mathbb{Z}/2}"', loop, distance=2em, in=35, out=325] \\
& I \arrow[uu, bend left=35] \arrow["\mathbf{sVect}"', loop, distance=2em, in=305, out=235] &
\end{tikzcd}$$
\caption{An exotic fermionic strongly fusion 2-category.}
\label{fig:exoticFSF2C}
\end{figure}

\begin{Example}\label{ex:actionnoextension}
Again, let us fix \smash{$\big(\widetilde{G},z\big)$} a finite super-group. Then, we may consider the symmetric fusion 1-category $\mathbf{Rep}\big(\widetilde{G},z\big)$ of finite dimensional super-representations of $G$ on which~$z$ acts by the parity automorphism. We write \smash{$\mathbf{2Rep}\big(\widetilde{G},z\big):=\mathbf{Mod}\big(\mathbf{Rep}\big(\widetilde{G},z\big)\big)$} for the corresponding (symmetric) fusion 2-category. Then, given that there is an essentially unique fiber functor \smash{$\mathbf{Rep}\big(\widetilde{G},z\big)\rightarrow\mathbf{SVect}$}, we can view $\mathbf{2SVect}$ as a finite semisimple module 2-category for~\smash{$\mathbf{2Rep}\big(\widetilde{G},z\big)$}. We write
$\mathfrak{C}:=\mathbf{\operatorname{End}}_{\mathbf{2Rep}(\widetilde{G},z)}(\mathbf{2SVect})
$
 for its Morita dual fusion 2-category. By~\cite[Lemma~3.2.1]{D9}, $\mathfrak{C}$ is a fermionic strongly fusion 2-category.

We claim that, in the formulation of Theorem~\ref{thm:classificationFSF2Cs} the data corresponding to the fusion 2-category $\mathfrak{C}$ is the finite group $G:=\widetilde{G}/z$, the class $\varpi$ in $H^2(\mathrm{B}G;\mathbb{Z}/2)$ is the class classifying the central extension $\widetilde{G}$ of $G$, and the class $\pi$ in $SH^{4+\varpi}(\mathrm{B}G)$ is the trivial one. In particular, we have~${\operatorname{Inv}(\mathfrak{C})\cong G\oplus\mathbb{Z}/2}$, so that $\mathfrak{C}$ is distinct from the fermionic strongly fusion 2-category~$\mathfrak{D}$ constructed in the previous example. In order to see this, observe that there is a~monoidal 2-functor~${\mathfrak{C}\rightarrow\mathbf{2SVect}}$. Namely, by construction, $\mathfrak{C}$ acts on the finite semisimple 2-category~$\mathbf{2SVect}$, i.e., there is a monoidal 2-functor $\mathbf{F}\colon\mathfrak{C}\rightarrow \mathbf{\operatorname{End}}(\mathbf{2SVect})\simeq\mathbf{Mod}(\mathcal{Z}(\mathbf{SVect}))$. But, this action commutes with the action of $\mathbf{2Rep}\big(\widetilde{G},z\big)$, so that the image of $\mathbf{F}$ must land in the monoidal sub-2-category $\mathbf{2SVect}$ of $\mathbf{Mod}(\mathcal{Z}(\mathbf{SVect}))$ as desired. This implies that the class~$\pi$ is trivial. On the other hand, it follows from \cite[Theorem~2.3.2]{D9} that $\Omega\mathscr{Z}(\mathfrak{C})\simeq \mathbf{Rep}\big(\widetilde{G},z\big)$, so that the class $\varpi$ is as claimed above.
\end{Example}

\begin{Example}\label{ex:lin2group}
Let $G$ be a finite group, $\varpi$ a 2-cocycle for $G$ with coefficients in $\mathbb{Z}/2$, and $\pi=(\alpha,\beta,\gamma)$ a class in $SH^{4+\varpi}(\mathrm{B}G)$. Provided that $\alpha=\operatorname{triv}$, then the corresponding fermionic strongly fusion 2-category is a fusion 2-category of twisted 2-group graded 2-vector spaces. More precisely, we can think of the 3-cocycle $\beta$ for $G$ with coefficients in $\mathbb{Z}/2$ as the Postnikov class of a finite 2-group $\mathcal{G}=\mathbb{Z}/2[1]\boldsymbol{\cdot}G[0]$. Given a 4-cocycle $\omega$ in $H^4(\mathrm{B}\mathcal{G},\mathds{k}^{\times})$, we can consider the fusion 2-category $\mathbf{2Vect}^{\omega}(\mathcal{G})$ of $\omega$-twisted $\mathcal{G}$-graded 2-vector spaces. This is explained in detail in \cite[Construction 2.1.16]{DR}. This is a fermionic strongly fusion 2-category exactly if the image of $\omega$ under the canonical map $H^4(\mathrm{B}\mathcal{G},\mathds{k}^{\times})\rightarrow H^4(\mathrm{B}^2\mathbb{Z}/2,\mathds{k}^{\times})$ is $(-1)^{\mathrm{Sq}^2}$. Namely, in this case, we have $\Omega\mathbf{2Vect}^{\omega}(\mathcal{G})\simeq\mathbf{SVect}$. Furthermore, every fermionic strongly fusion 2-category with~${\alpha=\operatorname{triv}}$ can be obtained via this construction. In particular, this includes all the fermionic strongly fusion 2-categories of Example~\ref{ex:topcocycleonly}.
\end{Example}

We now turn our attention to fermionic strongly fusion 2-categories whose group of connected components is a fixed 2-torsion group.

\begin{Example}\label{ex:FSF2CZ/2}
Let us examine the case $G=\mathbb{Z}/2^n$ with $n\geq 1$. Then, we have $H^2(\mathrm{B}\mathbb{Z}/2^n;\mathbb{Z}/2)\allowbreak\cong\mathbb{Z}/2$. On one hand, we have $SH^{4}(\mathrm{B}\mathbb{Z}/2^n)=0$ corresponding to the fermionic strongly fusion 2-category $\mathbf{2SVect}(\mathbb{Z}/2^n)$. Namely, we can consider the Atiyah--Hirzebruch spectral sequence
\begin{gather*}
E_2^{i,j}=H^i\big(\mathrm{B}\mathbb{Z}/2^n;SH^j(\mathrm{pt})\big)\Rightarrow SH^{i+j}(\mathrm{B}\mathbb{Z}/2^n)
\end{gather*}
 with corresponding $E_2$ page given by
\begin{gather*}\renewcommand{\arraystretch}{1.2}\begin{array}{c|ccccccc}
 j \\
 2 & \mathbb{Z}/2 & \mathbb{Z}/2 & \mathbb{Z}/2 & \mathbb{Z}/2 \\
 1 & \mathbb{Z}/2 & \mathbb{Z}/2 & \mathbb{Z}/2 & \mathbb{Z}/2 & \mathbb{Z}/2\\
 0 & \mathds{k}^{\times}& \mathbb{Z}/2^n & 0 & \mathbb{Z}/2^n & 0 & \mathbb{Z}/2^n \\
 \hline
 & 0 & 1 & 2 & 3 & 4 & 5 & \quad i.
\end{array}\end{gather*}

The $d_2$ differentials for this spectral sequence, for any finite group $G$ and with $\varpi=\operatorname{triv}$, are given by the $k$-invariants of $\mathbf{2SVect}^{\times}$, so that we have
\begin{equation}\label{eq:untwistedd2s}
d_2=\mathrm{Sq}^2\colon\ E^{i,2}_2\rightarrow E^{i+2,1}_2\qquad \textrm{and}\qquad d_2=(-1)^{\mathrm{Sq}^2}\colon\ E^{i,1}_2\rightarrow E^{i+2,0}_2,
\end{equation}
provided that $i\geq 1$, and where, as above, $t\mapsto (-1)^{t}$ is the homomorphism induced by $\mathbb{Z}/2\hookrightarrow \mathds{k}^{\times}$. In particular, the terms in degrees $(2,2)$ and $(3,1)$ are both killed so that $SH^{4}(\mathrm{B}\mathbb{Z}/2^n)=0$ as claimed.

On the other hand, using $\varpi$ to denote the non-trivial class in $H^2(\mathrm{B}\mathbb{Z}/2^{n},\mathbb{Z}/2)$, we claim that $SH^{4+\varpi}(\mathrm{B}\mathbb{Z}/2^{n})=\mathbb{Z}/2$. Namely, in that case, the $E_2$ page of the corresponding Atiyah--Hirzebruch spectral sequence is the same as above. However, the differentials \smash{$d_2\colon E^{i,j}_2\rightarrow E^{i-1,j+2}_2$} may be different. We do not know how to describe these differentials, and will therefore use other techniques in order to compute this group. We have to understand whether or not the groups in degrees $(2,2)$ and $(3,1)$ survive to the $E_{\infty}$ page. We assert that the group in degree $(2,2)$ does. Namely, this follows from the fact that we have exhibited two non-equivalent fermionic strongly fusion 2-categories $\mathfrak{C}$ as in Example~\ref{ex:actionnoextension} and $\mathfrak{D}$ as in Example~\ref{ex:actionandextension} with super-group $\bigl(\mathbb{Z}/2^{(n+1)},z\bigr)$ classified by $\varpi$ together with the classes $(\operatorname{triv},\operatorname{triv},\operatorname{triv})$ and $(\varpi,\operatorname{triv},\operatorname{triv})$. Now, let us assume that the group in degree $(3,1)$ survives. This would imply that there exists a class in $SH^{4+\varpi}(\mathrm{B}\mathbb{Z}/2^{n})$ of the form $(\operatorname{triv},\beta,\operatorname{triv})$ with non-trivial $\beta$. The corresponding fermionic strongly fusion 2-category then ought to be obtained via the construction of Example~\ref{ex:lin2group}. However, it follows by inspection that there are exactly two fermionic strongly fusion 2-category that can be obtained that way, which must then be $\mathbf{2SVect}(\mathbb{Z}/2^n)$ and $\mathfrak{C}$. We therefore find that the group in degree $(3,1)$ must be killed by the $d_2$ differential, thereby showing that $SH^{4+\varpi}(\mathrm{B}\mathbb{Z}/2^{n})=\mathbb{Z}/2$ as desired. We also wish to point out that it is expected that the two fusion 2-categories $\mathfrak{C}$ and $\mathfrak{D}$ have the same Drinfeld center. More specifically, if $\mathcal{B}$ is a~braided Ising fusion 1-category, then it is predicted that $\mathfrak{D}\boxtimes\mathbf{Mod}(\mathcal{B})$ is Morita equivalent to~$\mathfrak{C}$.
\end{Example}

\begin{Example}\label{ex:FSF2CZ2Z2}
We now take $G=\mathbb{Z}/2\oplus\mathbb{Z}/2$ with basis $a=(1,0)$ and $b=(0,1)$. In this case, we have
\[
H^2(\mathrm{B}G;\mathbb{Z}/2)\cong\mathbb{Z}/2\oplus\mathbb{Z}/2\oplus\mathbb{Z}/2.
\]
More precisely, if $c_1$, resp.\ $d_1$, denotes the elements of $H^1(\mathrm{B}G;\mathbb{Z}/2)$ that restricts non-trivially to $\langle a\rangle$, resp.\ $\langle b\rangle$, then $H^2(\mathrm{B}G;\mathbb{Z}/2)$ has a basis given by $c_1^2$, $d_1^2$, and $c_1d_1$. Let us consider the Atiyah--Hirzebruch spectral sequence
\begin{gather*}
E_2^{i,j}=H^i(\mathrm{B}G;SH^j(\mathrm{pt}))\Rightarrow SH^{i+j}(\mathrm{B}G)
\end{gather*}
corresponding to the trivial class $\operatorname{triv}$ in $H^2(\mathrm{B}G;\mathbb{Z}/2)$. Its $E_2$ page is given by
\begin{gather*}\renewcommand{\arraystretch}{1.2}\begin{array}{c|ccccccc}
 j \\
 2 & \mathbb{Z}/2 & \mathbb{Z}/2^{\oplus 2} & \mathbb{Z}/2^{\oplus 3} & \mathbb{Z}/2^{\oplus 4} \\
 1 & \mathbb{Z}/2 & \mathbb{Z}/2^{\oplus 2} & \mathbb{Z}/2^{\oplus 3} & \mathbb{Z}/2^{\oplus 4} & \mathbb{Z}/2^{\oplus 5}\\
 0 & \mathds{k}^{\times}& \mathbb{Z}/2^{\oplus 2} & \mathbb{Z}/2 & \mathbb{Z}/2^{\oplus 3} & \mathbb{Z}/2^{\oplus 2} & \mathbb{Z}/2^{\oplus 4} \\
 \hline
 & 0 & 1 & 2 & 3 & 4 & 5 & \quad i.
\end{array}\end{gather*}

The $d_2$ differentials are as described in the formulas in equation \eqref{eq:untwistedd2s}. Beyond the potential differential $d_3$, the other main difficulty lies in describing the differential $d_2=(-1)^{\mathrm{Sq}^2}$, and, more precisely, the image of $t\mapsto (-1)^{t}$.

In order to do so, consider the map of short exact sequences
 \begin{displaymath}\begin{tikzcd}[sep=small]
0 \arrow[r] & \mathbb{Z}/2 \arrow[r] \arrow[d, equal] & \mathbb{Z}/4 \arrow[r] \arrow[d] & \mathbb{Z}/2 \arrow[r] \arrow[d] & 0 \\
0 \arrow[r] & \mathbb{Z}/2 \arrow[r] & \mathds{k}^{\times} \arrow[r] & \mathds{k}^{\times} \arrow[r] & 0.
\end{tikzcd}\end{displaymath}
 For $i\geq 2$, this induces a commutative square
 \begin{displaymath}\begin{tikzcd}
H^{i-1}(\mathrm{B}G;\mathbb{Z}/2) \arrow[r, "\mathrm{Sq}^1"] \arrow[d] & H^i(\mathrm{B}G;\mathbb{Z}/2) \arrow[d, equal] \\
H^{i-1}(\mathrm{B}G;\mathds{k}^{\times}) \arrow[r, "\partial"'] & H^i(\mathrm{B}G;\mathbb{Z}/2).
\end{tikzcd}\end{displaymath}
 But, the bottom horizontal map is injective in the case $G=\mathbb{Z}/2\oplus\mathbb{Z}/2$ as every class in $H^i(\mathrm{B}G;\mathds{k}^{\times})$ is annihilated by the map on cohomology groups induced by $x\mapsto x^2$ on $\mathds{k}^{\times}$. This shows that the kernel of $t\mapsto (-1)^{t}$ consists exactly of those classes that are in the image of the Bockstein homomorphism $\mathrm{Sq}^1$.

In particular, we find that the group $SH^4(\mathrm{B}G)$ has order $16$. Namely, the $d_3$ differentials on the entries $(1,2)$ and $(2,2)$ vanish. For the latter, this is immediate because the entry in degree $(2,2)$ is completely killed by $d_2$. As for the former, this can be seen using the naturality of the Atiyah--Hirzebruch spectral sequence with respect to the various group homomorphisms~${\mathbb{Z}/2\hookrightarrow \mathbb{Z}/2\oplus \mathbb{Z}/2}$. A set of representatives for the classes in $SH^4(\mathrm{B}G)$ is therefore given by $(\operatorname{triv},\beta,\gamma)$ with $\gamma$ in $H^4(\mathrm{B}G;\mathds{k}^{\times})\cong \mathbb{Z}/2\oplus\mathbb{Z}/2$ arbitrary, and $\beta$ in the span of $c_1^2d_1$, $c_1d_1^2$ in~$H^3(\mathrm{B}G;\mathbb{Z}/2)$. The four fermionic strongly fusion 2-categories corresponding to $(\operatorname{triv},\operatorname{triv},\gamma)$ are the ones of Example~\ref{ex:topcocycleonly}. The others are different, but all arise via the construction discussed in Example~\ref{ex:lin2group}.

It would be interesting to compute the twisted supercohomology groups $SH^{4+\varpi}(\mathrm{B}G)$. Just as we have already seen in the preceding example, the main difficulty resides in describing the~$d_2$ differentials in the Atiyah--Hirzebruch spectral sequence in supercohomology.
\end{Example}

\appendix

\renewcommand{\thefootnote}{$*$}

\section[A computation in supercohomology. Appendix by Thibault Didier D\'ecoppet and Theo Johnson-Freyd]{A computation in supercohomology. Appendix by\\ Thibault Didier D\'ecoppet and Theo Johnson-Freyd\footnote{Perimeter Institute for Theoretical Physics, Waterloo, Ontario, Canada}
\renewcommand{\thefootnote}{}
\footnote{E-mail: \href{mailto:theojf@pitp.ca}{theojf@pitp.ca}}
\footnote{Department of Mathematics, Dalhousie University, Halifax, Nova Scotia, Canada}
\footnote{E-mail: \href{mailto:theojf@dal.ca}{theojf@dal.ca}}}\label{sec:computation}

Let $G = \mathbb{Z}/4\oplus \mathbb{Z}/4$. We claim that the canonical map $SH^4(\mathrm{B}G)\rightarrow H^2(\mathrm{B}G;\mathbb{Z}/2)$ is non-zero. Said differently, there exists a class $(\alpha,\beta,\gamma)$ in $SH^4(\mathrm{B}G)$ with $\alpha\neq \operatorname{triv}$. This proves that there exists a fermionic strongly fusion 2-category with trivial twist $\varpi$ whose group of invertible objects is a~non-trivial central extension of the group of connected components, thereby answering a~question of \cite{JFY}.

In order to prove the above claim, we will consider the Atiyah--Hirzebruch spectral sequence
\begin{gather*}
E^{i,j}_2=H^i(\mathrm{B}(\mathbb{Z}/4\oplus \mathbb{Z}/4);SH^j(\mathrm{pt}))\Rightarrow SH^{i+j}(\mathrm{B}(\mathbb{Z}/4\oplus \mathbb{Z}/4)),
\end{gather*}
 whose $E_2$ page is depicted below
\begin{equation*}\renewcommand{\arraystretch}{1.2}
 \begin{array}{c|ccccccc}
 j \\
 2 & \mathbb{Z}/2 & \mathbb{Z}/2^{\oplus 2} & \mathbb{Z}/2^{\oplus 3} & \mathbb{Z}/2^{\oplus 4} \\
 1 & \mathbb{Z}/2 & \mathbb{Z}/2^{\oplus 2} & \mathbb{Z}/2^{\oplus 3} & \mathbb{Z}/2^{\oplus 4} & \mathbb{Z}/2^{\oplus 5}\\
 0 & \mathds{k}^{\times}& \mathbb{Z}/4^{\oplus 2} & \mathbb{Z}/4 & \mathbb{Z}/4^{\oplus 3} & \mathbb{Z}/4^{\oplus 2} & \mathbb{Z}/4^{\oplus 4} \\
 \hline
 & 0 & 1 & 2 & 3 & 4 & 5 & \quad i.
\end{array}
\end{equation*}

The $d_2$ differentials are given by \eqref{eq:untwistedd2s}. It will be necessary to give names to various of the classes in the groups above. To this end, recall that there is an isomorphism of graded rings~$H^{\bullet}(\mathrm{B}\mathbb{Z}/4;\mathbb{Z}/2)\cong \mathbb{Z}/2[x_1,x_2]/x_1^2$, where $x_1$ has degree $1$ and $x_2$ has degree $2$. It then follows from the K\"unneth formula that
\begin{gather*}
H^{\bullet}(\mathrm{B}(\mathbb{Z}/4\oplus\mathbb{Z}/4);\mathbb{Z}/2)\cong \mathbb{Z}/2[x_1,y_1,x_2,y_2]/\big(x_1^2,y_1^2\big),
\end{gather*}
 where $x_1$, $y_1$ have degree $1$, $x_2$, $y_2$ have degree $2$, the classes $x_1$, $x_2$ are pulled back from $\mathbb{Z}/4\oplus 0$, and the classes $y_1$, $y_2$ are pulled back from $0\oplus \mathbb{Z}/4$. In particular, it follows that $E_3^{2,2}$, the kernel of the $d_2$ differential
 \begin{gather*}
 d_2=\mathrm{Sq}^2\colon\ E^{2,2}_2=H^2(\mathrm{B}(\mathbb{Z}/4\oplus\mathbb{Z}/4);\mathbb{Z}/2)\rightarrow E^{4,1}_2=H^4(\mathrm{B}(\mathbb{Z}/4\oplus\mathbb{Z}/4);\mathbb{Z}/2),
 \end{gather*}
 is spanned by the class $x_1y_1$. We also have to analyze $E_3^{5,0}$, the cokernel of the $d_2$ differential
 \begin{gather*}
 d_2=(-1)^{\mathrm{Sq}^2}\colon\ E^{3,1}_2=H^3(\mathrm{B}(\mathbb{Z}/4\oplus\mathbb{Z}/4);\mathbb{Z}/2)\rightarrow E^{5,0}_2=H^5(\mathrm{B}(\mathbb{Z}/4\oplus\mathbb{Z}/4);\mathds{k}^{\times}).
 \end{gather*}
 We assert that the classes
 \begin{equation}\label{eqn:2torsionclasses}
 (-1)^{x_1x_2^2},\qquad (-1)^{y_1x_2^2},\qquad (-1)^{x_1y_2^2},\qquad (-1)^{y_1y_2^2}
 \end{equation}
 are linearly independent in $H^5(\mathrm{B}(\mathbb{Z}/4\oplus\mathbb{Z}/4);\mathds{k}^{\times})$ and span the image of $d_2$. Namely, it follows from the short exact sequence of coefficients $0\rightarrow\mathbb{Z}/2\rightarrow\mathds{k}^{\times}\rightarrow\mathds{k}^{\times}\rightarrow 0$, that the image of the map $H^5(\mathrm{B}(\mathbb{Z}/4\oplus\mathbb{Z}/4);\mathbb{Z}/2)\rightarrow H^5(\mathrm{B}(\mathbb{Z}/4\oplus\mathbb{Z}/4);\mathds{k}^{\times})$ is precisely the subgroup~${\mathbb{Z}/2^{\oplus 4}\subseteq \mathbb{Z}/4^{\oplus 4}}$, so that $E_3^{5,0}\cong \mathbb{Z}/2^{\oplus 4}$. Moreover, as $(-1)^{\mathrm{Sq}^1}$ is the trivial map, this image is indeed generated by the classes in \eqref{eqn:2torsionclasses}: We can use this to give names to generators of the group ${H^5(\mathrm{B}(\mathbb{Z}/4\oplus\mathbb{Z}/4);\mathds{k}^{\times})}$. We use $\sqrt{x}$ with $x$ one of the classes in \eqref{eqn:2torsionclasses} to denote a class in $H^5(\mathrm{B}(\mathbb{Z}/4\oplus\mathbb{Z}/4);\mathds{k}^{\times})$ such that $\sqrt{x}\cdot \sqrt{x} = x$. By definition, the classes~$\sqrt{x}$ for~$x$ in \eqref{eqn:2torsionclasses} generate $H^5(\mathrm{B}(\mathbb{Z}/4\oplus\mathbb{Z}/4);\mathds{k}^{\times})$. Moreover, their images under the quotient map~${H^5(\mathrm{B}(\mathbb{Z}/4\oplus\mathbb{Z}/4);\mathds{k}^{\times})\twoheadrightarrow E_3^{5,0}\cong\mathbb{Z}/2^{\oplus 2}}$ are independent of any choices and generate the target. This means that the image of the class $x_1y_1$ under the $d_3$ differential \smash{$d_3\colon E^{2,2}_3\rightarrow E^{5,0}_3$} can be expressed as a linear combination of the classes $\sqrt{x}$ with $x$ as in \eqref{eqn:2torsionclasses}.

It remains to argue that the class $x_1y_1$ in $E_3^{2,2}\subseteq H^2(\mathrm{B}(\mathbb{Z}/4\oplus\mathbb{Z}/4);\mathbb{Z}/2)$ survives the $d_3$ differential \smash{$d_3\colon E_3^{2,2}\rightarrow E_3^{5,0}$}. In order to do so, we will use the naturality of the Atiyah--Hirzebruch spectral sequence. More precisely, we consider the epimorphism $f\colon \mathbb{Z}/4\oplus\mathbb{Z}/4\twoheadrightarrow \mathbb{Z}/4\oplus\mathbb{Z}/2$, the Atiyah--Hirzebruch spectral sequence
 \begin{gather*}
\widetilde{E}^{i,j}_2=H^i(\mathrm{B}(\mathbb{Z}/4\oplus \mathbb{Z}/2);SH^j(\mathrm{pt}))\Rightarrow SH^{i+j}(\mathrm{B}(\mathbb{Z}/4\oplus \mathbb{Z}/2)),
\end{gather*}
 together with the induced map of spectral sequences \smash{$f^*\colon\widetilde{E}^{i,j}_2\rightarrow E^{i,j}_2$}. It follows from the K\"unneth formula that
 \begin{gather*}
 H^{\bullet}(\mathrm{B}(\mathbb{Z}/4\oplus\mathbb{Z}/2);\mathbb{Z}/2)\cong \mathbb{Z}/2[x_1,z_1,x_2]/x_1^2,
 \end{gather*}
 where $x_1$, $z_1$ have degree $1$, $x_2$ has degree $2$, the classes $x_1$, $x_2$ are pulled back from $\mathbb{Z}/4\oplus 0$, and the class $z_1$ is pulled back from $0\oplus \mathbb{Z}/2$. Under the pullback map $f^*$, we have $x_1\mapsto x_1$, $x_2\mapsto x_2$, and $z_1\mapsto y_1$. In particular, observe that the class $x_1y_1$ is in the image of the pullback $f^*$. Further, we have that $\widetilde{E}_3^{2,2}$, the kernel of $d_2=\mathrm{Sq}^2\colon H^2(\mathrm{B}(\mathbb{Z}/4\oplus\mathbb{Z}/2);\mathbb{Z}/2)\rightarrow H^4(\mathrm{B}(\mathbb{Z}/4\oplus\mathbb{Z}/2);\mathbb{Z}/2),$ is spanned by the class $x_1y_1$. Moreover, using an argument similar to the one given above, we find that the image of $d_2=(-1)^{\mathrm{Sq}^2}\colon H^3(\mathrm{B}(\mathbb{Z}/4\oplus\mathbb{Z}/2);\mathbb{Z}/2)\rightarrow H^5(\mathrm{B}(\mathbb{Z}/4\oplus\mathbb{Z}/2);\mathds{k}^{\times})$ is~$\mathbb{Z}/2^{\oplus 4}\subseteq \mathbb{Z}/4\oplus\mathbb{Z}/2^{\oplus 3}\cong H^5(\mathrm{B}(\mathbb{Z}/4\oplus\mathbb{Z}/2);\mathds{k}^{\times})$ and is generated by the classes \begin{gather*}
 (-1)^{x_1x_2^2},\qquad (-1)^{z_1x_2^2},\qquad (-1)^{z_1^3x_2},\qquad (-1)^{z_1^5}.
 \end{gather*}
 It follows from the naturality of the K\"unneth formula that there exists a class \smash{$\sqrt{(-1)^{x_1x_2^2}}$} in $H^5(\mathrm{B}(\mathbb{Z}/4\oplus\mathbb{Z}/2);\mathds{k}^{\times})$ such that
 \[
 \sqrt{(-1)^{x_1x_2^2}}\cdot \sqrt{(-1)^{x_1x_2^2}} = (-1)^{x_1x_2^2}.
 \]
 In particular, the image of \smash{$\sqrt{(-1)^{x_1x_2^2}}$} under the quotient map $H^5(\mathrm{B}(\mathbb{Z}/4\oplus\mathbb{Z}/2);\mathds{k}^{\times})\twoheadrightarrow \widetilde{E}^{5,0}_3\cong \mathbb{Z}/2$ is a generator. Moreover, as \smash{$f^*\big((-1)^{x_1x_2^2}\big) = (-1)^{x_1x_2^2}$} by naturality of $t\mapsto (-1)^t$, we must have
 \[
 f^*(\sqrt{(-1)^{x_1x_2^2}}) = \sqrt{(-1)^{x_1x_2^2}}
 \]
 in $E^{5,0}_3$. But, if the $d_3$ differential $d_3\colon\widetilde{E}^{2,2}_3\rightarrow \widetilde{E}^{5,0}_3$ were non-zero, we would have
 \[d_3(x_1z_1) = \sqrt{(-1)^{x_1x_2^2}},
 \]
 and, by naturality of the Atiyah--Hirzebruch spectral sequence, we would therefore have that
 \[
 d_3(x_1y_1) = \sqrt{(-1)^{x_1x_2^2}}
 \]
in $E^{5,0}_3$. On the other hand, we can also consider the epimorphism $g\colon\mathbb{Z}/4\oplus\mathbb{Z}/4\twoheadrightarrow \mathbb{Z}/2\oplus\mathbb{Z}/4$, and, by an analogous argument, conclude that, provided that $d_3(x_1y_1)$ is non-zero, we would have \[d_3(x_1y_1)=\sqrt{(-1)^{y_1y_2^2}}.\] This shows that we must have $d_3(x_1y_1) = 0$, which concludes the proof of the claim.

\subsection*{Acknowledgements}

I am particularly indebted towards Theo Johnson-Freyd and David Reutter for sharing some of the ideas of their proof of the completely general version of group graded extension theory, which have inspired our proof of Theorem \ref{thm:extensiontheory}, and towards Matthew Yu for help with the cohomology computations of Section~\ref{sec:FSF2Cs}. I would also like to thank the referees for suggesting many invaluable improvements and clarifications. This work was supported in part by the Simons Collaboration on Global Categorical Symmetries.

\pdfbookmark[1]{References}{ref}
\LastPageEnding


\begin{thebibliography}{99}
\footnotesize\itemsep=0pt

\bibitem{Cr}
Crans S.E., Generalized centers of braided and sylleptic monoidal
 {$2$}-categories, \href{https://doi.org/10.1006/aima.1998.1720}{\textit{Adv. Math.}} \textbf{136} (1998), 183--223.

\bibitem{Cui}
Cui S.X., Four dimensional topological quantum field theories from
 {$G$}-crossed braided categories, \href{https://doi.org/10.4171/qt/128}{\textit{Quantum Topol.}} \textbf{10} (2019),
 593--676, \href{https://arxiv.org/abs/1610.07628}{arXiv:1610.07628}.

\bibitem{DN}
Davydov A., Nikshych D., Braided {P}icard groups and graded extensions of
 braided tensor categories, \href{https://doi.org/10.1007/s00029-021-00670-1}{\textit{Selecta Math.~(N.S.)}} \textbf{27} (2021),
 65, 87~pages, \href{https://arxiv.org/abs/2006.08022}{arXiv:2006.08022}.

\bibitem{D1}
D\'ecoppet T.D., Multifusion categories and finite semisimple 2-categories,
 \href{https://doi.org/10.1016/j.jpaa.2022.107029}{\textit{J.~Pure Appl. Algebra}} \textbf{226} (2022), 107029, 16~pages,
 \href{https://arxiv.org/abs/2012.15774}{arXiv:2012.15774}.

\bibitem{D2}
D\'ecoppet T.D., Weak fusion 2-categories, \textit{Cah. Topol. G\'eom.
 Diff\'er. Cat\'eg.} \textbf{63} (2022), 3--24, \href{https://arxiv.org/abs/2103.15150}{arXiv:2103.15150}.

\bibitem{D8}
D\'ecoppet T.D., The {M}orita theory of fusion 2-categories, \textit{High.
 Struct.} \textbf{7} (2023), 234--292, \href{https://arxiv.org/abs/2208.08722}{arXiv:2208.08722}.

\bibitem{D9}
D\'ecoppet T.D., Drinfeld centers and {M}orita equivalence classes of fusion
 2-categories, \textit{Compos. Math.}, {t}o appear, \href{https://arxiv.org/abs/2211.04917}{arXiv:2211.04917}.

\bibitem{D4}
D\'ecoppet T.D., Finite semisimple module 2-categories, \textit{Selecta Math.~(N.S.)}, {t}o appear, \href{https://arxiv.org/abs/2107.11037}{arXiv:2107.11037}.

\bibitem{D10}
D\'ecoppet T.D., On the dualizability of fusion 2-categoriess, \textit{Quantum Topol.}, {t}o appear, \href{https://arxiv.org/abs/2311.16827}{arXiv:2311.16827}.

\bibitem{DY23}
D\'ecoppet T.D., Yu M., Fiber 2-functors and {T}ambara--{Y}amagami fusion
 2-categories, \href{https://arxiv.org/abs/2306.08117}{arXiv:2306.08117}.

\bibitem{DR}
Douglas C.L., Reutter D.J., Fusion 2-categories and a state-sum invariant for
 4-manifolds, \href{https://arxiv.org/abs/1812.11933}{arXiv:1812.11933}.

\bibitem{El1}
Elgueta J., Cohomology and deformation theory of monoidal 2-categories.~{I},
 \href{https://doi.org/10.1016/S0001-8708(03)00078-1}{\textit{Adv. Math.}} \textbf{182} (2004), 204--277, \href{https://arxiv.org/abs/math.QA/0204099}{arXiv:math.QA/0204099}.

\bibitem{El2}
Elgueta J., Representation theory of 2-groups on {K}apranov and {V}oevodsky's
 2-vector spaces, \href{https://doi.org/10.1016/j.aim.2006.11.010}{\textit{Adv. Math.}} \textbf{213} (2007), 53--92,
 \href{https://arxiv.org/abs/math.CT/0408120}{arXiv:math.CT/0408120}.

\bibitem{EG}
Etingof P., Gelaki S., Descent and forms of tensor categories, \href{https://doi.org/10.1093/imrn/rnr119}{\textit{Int.
 Math. Res. Not.}} \textbf{2012} (2012), 3040--3063, \href{https://arxiv.org/abs/1102.0657}{arXiv:1102.0657}.

\bibitem{EGNO}
Etingof P., Gelaki S., Nikshych D., Ostrik V., Tensor categories, \textit{Math.
 Surveys Monogr}, Vol.~205, \href{https://doi.org/10.1090/surv/205}{American Mathematical Society}, Providence, RI,
 2015.

\bibitem{ENO2}
Etingof P., Nikshych D., Ostrik V., Fusion categories and homotopy theory,
 \href{https://doi.org/10.4171/QT/6}{\textit{Quantum Topol.}} \textbf{1} (2010), 209--273, \href{https://arxiv.org/abs/0909.3140}{arXiv:0909.3140}.

\bibitem{GJF0}
Gaiotto D., Johnson-Freyd T., Symmetry protected topological phases and
 generalized cohomology, \href{https://doi.org/10.1007/jhep05(2019)007}{\textit{J.~High Energy Phys.}} \textbf{2019} (2019),
 007, 34~pages, \href{https://arxiv.org/abs/1712.07950}{arXiv:1712.07950}.

\bibitem{GJF}
Gaiotto D., Johnson-Freyd T., Condensations in higher categories,
 \href{https://arxiv.org/abs/1905.09566}{arXiv:1905.09566}.

\bibitem{GW}
Gu Z.-C., Wen X.-G., Symmetry-protected topological orders for interacting
 fermions: Fermionic topological nonlinear {$\sigma$} models and a special
 group supercohomology theory, \href{https://doi.org/10.1103/PhysRevB.90.115141}{\textit{Phys. Rev.~B}} \textbf{90} (2014),
 115141, 59~pages, \href{https://arxiv.org/abs/1201.2648}{arXiv:1201.2648}.

\bibitem{JF}
Johnson-Freyd T., On the classification of topological orders, \href{https://doi.org/10.1007/s00220-022-04380-3}{\textit{Comm.
 Math. Phys.}} \textbf{393} (2022), 989--1033, \href{https://arxiv.org/abs/2003.06663}{arXiv:2003.06663}.

\bibitem{JF2}
Johnson-Freyd T., (3+1){D} topological orders with only a
 $\mathbb{Z}/2$-charged particle, \textit{Comm. Math. Phys.}, {t}o appear,
 \href{https://arxiv.org/abs/2011.11165}{arXiv:2011.11165}.

\bibitem{JFR3}
Johnson-Freyd T., Reutter D.J., Super-duper vector spaces~{I}, 2023, available
 at
 \url{https://homepages.uni-regensburg.de/~lum63364/ConferenceFFT/Reutter.pdf}.

\bibitem{JFR}
Johnson-Freyd T., Reutter D., Minimal nondegenerate extensions,
 \href{https://doi.org/10.1090/jams/1023}{\textit{J.~Amer. Math. Soc.}} \textbf{37} (2024), 81--150,
 \href{https://arxiv.org/abs/2105.15167}{arXiv:2105.15167}.

\bibitem{JFY}
Johnson-Freyd T., Yu M., Fusion 2-categories with no line operators are
 grouplike, \href{https://doi.org/10.1017/S0004972721000095}{\textit{Bull. Aust. Math. Soc.}} \textbf{104} (2021), 434--442,
 \href{https://arxiv.org/abs/2010.07950}{arXiv:2010.07950}.

\bibitem{JPR}
Jones C., Penneys D., Reutter D., A 3-categorical perspective on {$G$}-crossed
 braided categories, \href{https://doi.org/10.1112/jlms.12687}{\textit{J.~Lond. Math. Soc.}} \textbf{107} (2023),
 333--406, \href{https://arxiv.org/abs/2009.00405}{arXiv:2009.00405}.

\bibitem{KT}
Kapustin A., Thorngren R., Fermionic {SPT} phases in higher dimensions and
 bosonization, \href{https://doi.org/10.1007/jhep10(2017)080}{\textit{J.~High Energy Phys.}} \textbf{2017} (2017), no.~10,
 080, 48~pages, \href{https://arxiv.org/abs/1701.08264}{arXiv:1701.08264}.

\bibitem{KTZ}
Kong L., Tian Y., Zhou S., The center of monoidal 2-categories in {(3+1)D}
 {D}ijkgraaf--{W}itten theory, \href{https://doi.org/10.1016/j.aim.2019.106928}{\textit{Adv. Math.}} \textbf{360} (2020),
 106928, 25~pages, \href{https://arxiv.org/abs/1905.04644}{arXiv:1905.04644}.

\bibitem{LKW}
Lan T., Kong L., Wen X.-G., Classification of {(3+1)D} bosonic topological
 orders~{(I)}: The case when pointlike excitations are all bosons,
 \href{https://doi.org/10.1103/PhysRevX.8.021074}{\textit{Phys. Rev.~X}} \textbf{8} (2018), 021074, 24~pages,
 \href{https://arxiv.org/abs/1704.04221}{arXiv:1704.04221}.

\bibitem{LW}
Lan T., Wen X.-G., Classification of {(3+1)D} bosonic topological orders~{(II)}:
 The case when some pointlike excitations are fermions, \href{https://doi.org/10.1103/PhysRevX.9.021005}{\textit{Phys. Rev.~X}}
 \textbf{9} (2019), 021005, 37~pages, \href{https://arxiv.org/abs/1801.08530}{arXiv:1801.08530}.

\bibitem{L2}
Lurie J., Higher algebra, 2017, available at
 \url{https://www.math.ias.edu/~lurie/papers/HA.pdf}.

\bibitem{NSS}
Nikolaus T., Schreiber U., Stevenson D., Principal {$\infty$}-bundles: general
 theory, \href{https://doi.org/10.1007/s40062-014-0083-6}{\textit{J.~Homotopy Relat. Struct.}} \textbf{10} (2015), 749--801,
 \href{https://arxiv.org/abs/1207.0248}{arXiv:1207.0248}.

\bibitem{Pstr}
Pstr\c{a}gowski P., On dualizable objects in monoidal bicategories,
 \textit{Theory Appl. Categ.} \textbf{38} (2022), 257--310,
 \href{https://arxiv.org/abs/1411.6691}{arXiv:1411.6691}.

\bibitem{San}
Sanford S., Fusion categories over non-algebraically closed fields, Ph.D.~Thesis, {I}ndiana {U}niversity, 2022, available at
 \url{https://www.proquest.com/docview/2715420938/ACDC80FE6AEA477EPQ/1}.

\bibitem{SP}
Schommer-Pries C.J., The classification of two-dimensional extended topological
 field theories, Ph.D.~Thesis, {U}niversity of California, Berkeley, 2009,
 \href{https://arxiv.org/abs/1112.1000}{arXiv:1112.1000}.

\bibitem{WG}
Wang Q.-R., Gu Z.-C., Towards a complete classification of symmetry-protected
 topological phases for interacting fermions in three dimensions and a general
 group supercohomology theory, \href{https://doi.org/10.1103/PhysRevX.8.011055}{\textit{Phys. Rev.~X}} \textbf{8} (2018),
 011055, 29~pages, \href{https://arxiv.org/abs/1703.10937}{arXiv:1703.10937}.

\end{thebibliography}
\end{document}